\newtheorem{theorem}{Theorem}[section]
\newtheorem{thm}[theorem]{Theorem}
\newtheorem{lemma}[theorem]{Lemma}
\newtheorem{cor}[theorem]{Corollary}
\numberwithin{equation}{section}
\def\sp{\hspace{2ex}}
\def\supp{{\rm{supp\,}}}
\begin{document}

\title[Indecomposable Integral Flows on Signed Graphs]
{Classification of Indecomposable Integral Flows on Signed Graphs}

\author{Beifang Chen}
\address{Department of Mathematics, Hong Kong University of Science and Technology,
Clear Water Bay, Kowloon, Hong Kong}
\email{mabfchen@ust.hk}
\thanks{Research is supported by RGC Competitive Earmarked Research
Grants 600608 and 600409}

\author{Jue Wang}
\address{Department of Mathematics and Physics, Shenzhen Polytechnic,
Shenzhen, Guangdong Province, 518088, P.R. China}
\email{twojade@alumni.ust.hk}


\subjclass[2000]{05C20, 05C21, 05C22, 05C38, 05C45, 05C75}

\keywords{Signed graph, orientation, circuit, indecomposable flow,
Eulerian walk, minimal Eulerian walk.}

\maketitle

\begin{abstract}
An indecomposable flow $f$ on a signed graph $\Sigma$ is a
nontrivial integral flow that cannot be decomposed into $f=f_1+f_2$,
where $f_1,f_2$ are nontrivial integral flows having the same sign
(both $\geq 0$ or both $\leq 0$) at each edge. This paper is to
classify indecomposable flows into characteristic vectors of
Eulerian cycle-trees --- a class of signed graphs having a kind of
tree structure in which all cycles can be viewed as vertices of a
tree. Moreover, each indecomposable flow other than circuit flows
can be further decomposed into a sum of certain half circuit flows
having the same sign at each edge. The variety of indecomposable
flows of signed graphs is much richer than that of ordinary unsigned
graphs.
\end{abstract}



\section{Introduction}

A {signed graph} is an ordinary graph whose each edge is endowed
with either a positive sign or a negative sign. The system was
formally introduced by Harary~\cite{Harary} who characterized
balanced signed graphs up to switching, and was much developed by
Zaslavsky~\cite{Zaslavsky1, Zaslavsky3} who successfully extended
most important notions of ordinary graphs to signed graphs, such as
circuit, bond, orientation, incidence matrix, Laplacian, and
associated matroids, etc. Based on Zaslavsky's work, Chen and
Wang~\cite{Chen and Wang 1} introduced flow and tension lattices of
signed graphs and obtained fundamental properties on flows and
tensions, including a few characterizations of cuts and bonds.

Now it is natural to ask, inside the flow and tension lattices, how
integral flows and tensions are build up from more basic integral
flows and tensions. More specifically, what does an integral flow or
tension look like if it {\em cannot} be decomposed at integer scale
but can be possibly decomposed further at fractional scale? The
answer is not only interesting but fundamental in nature because if
one considers circuit flows to be at {atomic level} then
indecomposable flows are at {molecular level}.

For ordinary graphs it is easy to see that indecomposable flows are
simply the graph circuit flows at integer scale. For signed graphs,
however, we shall see that indecomposable flows are much richer than
that of unsigned graphs because in addition to circuit flows, the
fixed spin (signs on edges) produces a new class of characteristic
vectors of so-called {\em directed Eulerian cycle-trees}, which are
not decomposable at integer scale while decomposable (into signed
graph circuit flows) at half-integer scale. The present paper is to
present the solution of such a new phenomenon on signed graphs.

Let $\Sigma=(V,E,\sigma)$ be a signed graph throughout, where
$(V,E)$ is an ordinary finite graph with possible loops and multiple
edges, $V$ is the vertex set, $E$ is the edge set, and
$\sigma:E\rightarrow\{-1,1\}$ is the sign function. Each edge subset
$S\subseteq E$ induces a {\em signed subgraph}
$\Sigma(S):=(V,S,\sigma_S)$, where $\sigma_S$ is the restriction of
$\sigma$ to $S$. A {\em cycle} of $\Sigma$ is a simple closed path.
The \emph{sign} of a cycle is the product of signs on its edges. A
cycle is said to be {\em balanced (unbalanced)} if its sign is
positive (negative). A signed graph is said to be {\em balanced} if
all cycles are balanced, and {\em unbalanced} if one of its cycles
is unbalanced. A connected component of $\Sigma$ is called a {\em
balanced (unbalanced) component} if it is balanced (unbalanced) as a
signed subgraph.

An {\em orientation} of a signed graph $\Sigma$ is an assignment
that each edge $e$ is assigned two arrows at its end-vertices $u,v$
as follows: (i) if $e$ is a positive edge, the two arrows are in the
same direction; (ii) if $e$ is a negative edge, the two arrows are
in opposite directions; see Figure~\ref{Orientation}.
\begin{figure}[h]
\includegraphics[width=13.4mm]{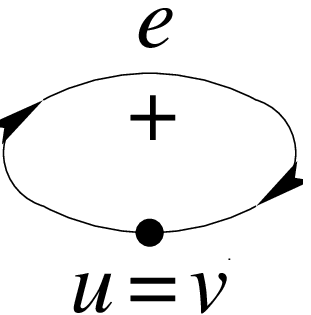}\hspace{11.5mm}
\includegraphics[width=13.4mm]{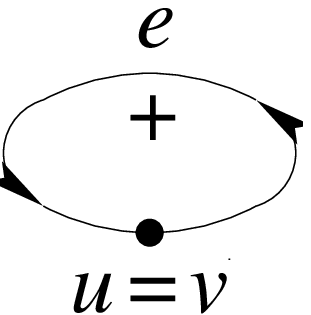}\hspace{11.5mm}
\includegraphics[width=13.4mm]{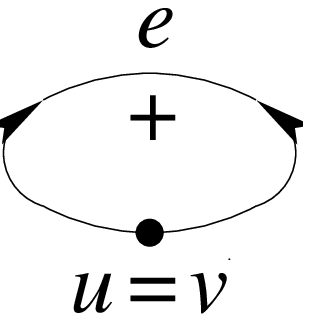}\hspace{11.5mm}
\includegraphics[width=13.4mm]{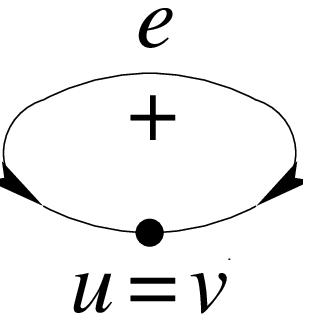}\\ \vspace{3mm}
\includegraphics[width=20mm]{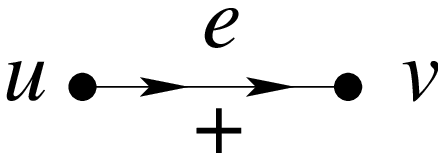}\hspace{5mm}
\includegraphics[width=20mm]{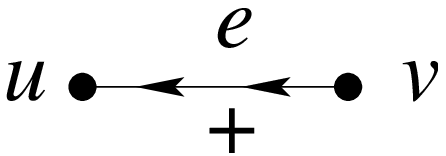}\hspace{5mm}
\includegraphics[width=20mm]{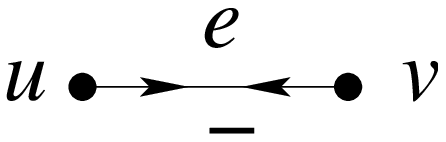}\hspace{5mm}
\includegraphics[width=20mm]{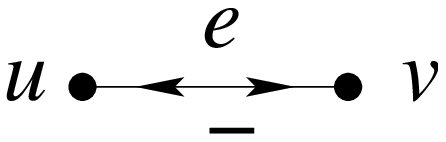}
\caption{Orientations on loops and non-loop edges.}
\label{Orientation}
\end{figure}
We may think of an arrow on an edge $e$ at its one end-vertex $v$ as
$+1$ if the arrow points away from $v$ and $-1$ if the arrow points
toward $v$. Then there are both $+1$ and $-1$ for a positive loop at
its unique end-vertex, and two $+1$'s or two $-1$'s for a negative
loop at its unique end-vertex. So an orientation on $\Sigma$ can be
considered as a multi-valued function $\varepsilon: V\times
E\rightarrow\{-1,0,1\}$ such that
\begin{enumerate}
\item[(i)] $\varepsilon(v,e)$ has two values $+1$ and $-1$ if
$e$ is a positive loop at its unique end-vertex $v$, and is
single-valued otherwise;

\item[(ii)] $\varepsilon(v,e)=0$ if $v$ is not an end-vertex of the edge $e$;
and

\item[(iii)] $\varepsilon(u,e)\varepsilon(v,e)=-\sigma(e)$, $e=uv$.
\end{enumerate}
A signed graph $\Sigma$ with an orientation $\varepsilon$ is called
an {\em oriented signed graph} $(\Sigma,\varepsilon)$. We assume
that $(\Sigma,\varepsilon)$ is an oriented signed graph throughout
the whole paper.

Let $\varepsilon_i$ ($i=1,2$) be orientations on signed subgraphs
$\Sigma_i$ of $\Sigma$. The \emph{coupling} of
$\varepsilon_1,\varepsilon_2$ is a function
$[\varepsilon_1,\varepsilon_2]: E \rightarrow{\Bbb Z}$, defined for
$e=uv$ by
\begin{eqnarray}\label{Coupling-Def}
[\varepsilon_1,\varepsilon_2](e)=\left\{
\begin{array}{rl}
 1 & \hbox{if $e\in \Sigma_1\cap\Sigma_2$, $\varepsilon_1(v,e)=\varepsilon_2(v,e)$,} \\
-1 & \hbox{if $e\in \Sigma_1\cap \Sigma_2$, $\varepsilon_1(v,e)\neq \varepsilon_2(v,e)$,} \\
 0 & \hbox{otherwise.}
\end{array}
\right.
\end{eqnarray}
In other words,
$[\varepsilon_1,\varepsilon_2](e)=\varepsilon_1(v,e)\varepsilon_2(v,e)$
if $e=uv$.

Let $A$ be an abelian group and be assumed automatically a ${\Bbb
Z}$-module. For each edge $e$ and its end-vertices $u,v$, let ${\rm
End}(e)$ denote the multiset $\{u,v\}$. Associated with
$(\Sigma,\varepsilon)$ is the {\em boundary operator} $\partial:
{A}^E\rightarrow {A}^V$ defined for $f\in A^E$ and $v\in V$ by
\begin{align}
(\partial f)(v) & = \sum_{e\in E} {\bm m}_{v,e}f(e) =\sum_{e\in
E,\,u\in{\rm End}(e),\,u=v}\varepsilon(u,e)f(e),
\end{align}
where ${\bm m}_{v,e}=\varepsilon(v,e)$ if $e$ is a non-loop, ${\bm
m}_{v,e}=2\varepsilon(v,e)$ if $e$ is a negative loop, and ${\bm
m}_{v,e}=0$ otherwise. A function $f:E\rightarrow A$ is said to be
{\em conservative} at a vertex $v$ if $(\partial f)(v)=0$, and is
said to be an {\em $A$-flow} of $(\Sigma,\varepsilon)$ if it is
conservative at each vertex of $\Sigma$. The {\em support} of a flow
$f$ is the edge subset
\begin{equation}
\supp f=\{e\in E\mid f(e)\neq 0\}.
\end{equation}
The set of all $A$-flows forms an abelian group, called the {\em
flow group} of $(\Sigma,\varepsilon)$ with values in $A$, denoted
$F(\Sigma,\varepsilon;A)$. We call
$F(\Sigma,\varepsilon):=F(\Sigma,\varepsilon;{\Bbb R})$ the {\em
flow space}, and $Z(\Sigma,\varepsilon): =F(\Sigma,\varepsilon;{\Bbb
Z})$ the {\em flow lattice} of $(\Sigma,\varepsilon)$. For further
information about flows of signed graphs, see
\cite{Beck-Zaslavsky1,Bouchet,Chen and Wang 1, Chen and Wang 2,
Khelladi}. For notions of ordinary graphs, we refer to the books
\cite{Bollobas,Bondy-Murty1,Godsil-Royle}.

A flow is said to be {\em nontrivial} if its support is nonempty. A
nontrivial integral flow $f$ is said to be {\em decomposable} if $f$
can be written as
\[
f=f_1+f_2,
\]
where $f_i$ are nontrivial integral flows having the same sign (both
nonnegative or both nonpositive) at every edge, that is,
$f_1(e)f_2(e)\geq 0$ for all $e\in E$. Nontrivial integral flows
that are not decomposable are called {\em indecomposable flows}. An
integral flow $f$ is said to be {\em elementary} if it is
indecomposable and there is no nontrivial integral flow $g$ such
that $\supp g$ is properly contained in $\supp f$. Compare with
Tutte's definition of elementary chains \cite{Tutte2,Tutte3}.

Let $W$ be a walk of length $n$ in $\Sigma$ and be written as a
vertex-edge sequence
\begin{equation}\label{Vertex-Edge-Seq}
W=u_0x_1u_1x_2\ldots u_{n-1}x_{n}u_{n},
\end{equation}
where each $x_i$ is an edge with end-vertices $u_{i-1},u_{i}$. The
walk $W$ is said to be {\em closed} if the {\em initial vertex}
$u_0$ is the same as the {\em terminal vertex} $u_{n}$. The {\em
sign} of $W$ is the product
\begin{equation}
\sigma(W)=\prod_{i=1}^n\sigma(x_i).
\end{equation}
The {\em support} of $W$ is the set $\supp W$ of edges $x_i$
($i=1,\ldots,n$) with no repetition. We may think of $W$ as the
multiset $\{x_1,x_2,\ldots,x_n\}$ (with repetition allowed) of $n$
edges on $\supp W$.

A \emph{direction} of $W$ is a function $\varepsilon_W$ with values
either $1$ or $-1$, defined for all vertex-edge pairs
$(u_{i-1},x_i)$ and $(u_i,x_i)$, such that
\begin{equation}
\varepsilon_W(u_{i-1},x_i)\varepsilon_W(u_i,x_i)=-\sigma(x_i), \sp
\varepsilon_W(u_i,x_{i})+\varepsilon_W(u_i,x_{i+1})=0.
\end{equation}
Every walk has exactly two opposite directions. A walk $W$ with a
direction $\varepsilon_W$ is called a {\em directed walk}, denoted
$(W,\varepsilon_W)$, and is further called a {\em directed closed
walk} if $u_0=u_{n}$ and
\[
\varepsilon_W(u_0,x_1)+\varepsilon_W(u_{n},x_n)=0.
\]

A directed walk $(W,\varepsilon_W)$ is said to be {\em midway-back
avoided}, provided that if $u_\alpha=u_\beta$ with
$0\leq\alpha<\beta<n$ in (\ref{Vertex-Edge-Seq}) then
\begin{equation}
\varepsilon_W(u_\beta,x_\beta)=\varepsilon_W(u_\alpha,x_{\alpha+1}).
\end{equation}
Figure~\ref{Double-vertex-pattern} demonstrates four possible
orientation patterns at a double vertex in a directed midway-back
avoided walk.
\begin{figure}[h]
\includegraphics[width=25mm]{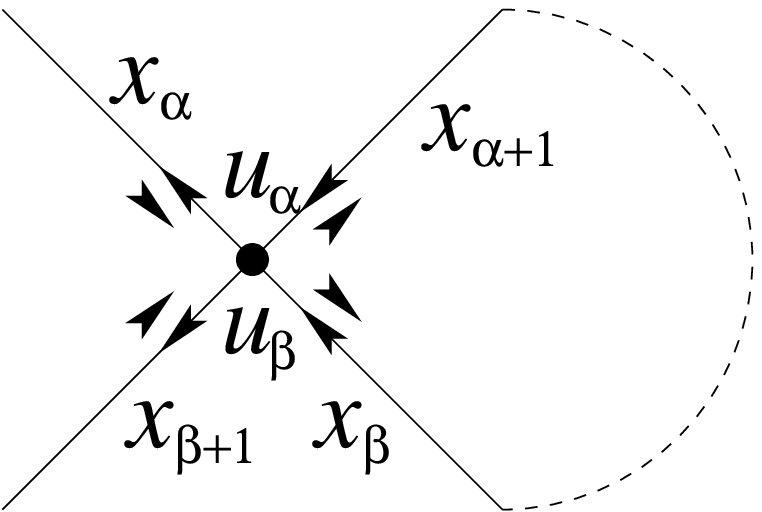}\hspace{2mm}
\includegraphics[width=25mm]{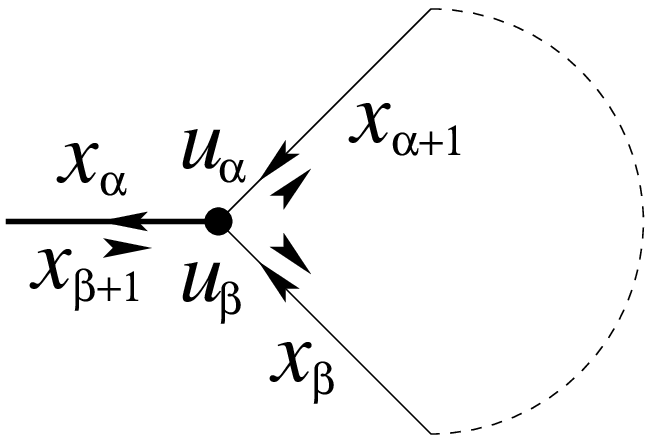}\hspace{2mm}
\includegraphics[width=25mm]{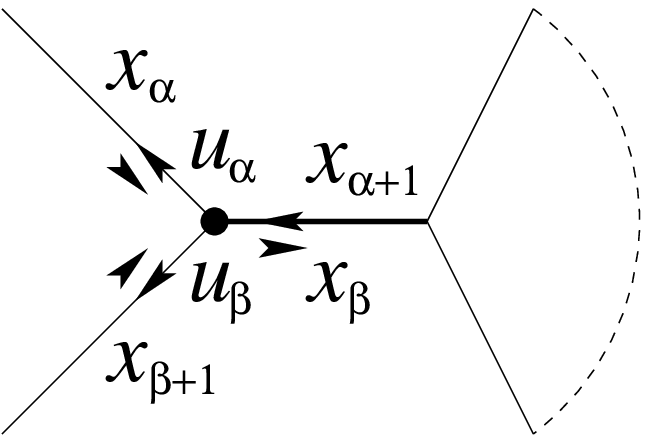}\hspace{2mm}
\includegraphics[width=25mm]{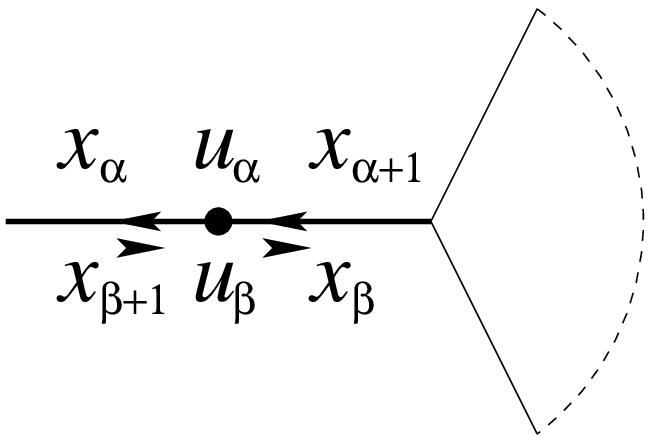}
\caption{Orientation patterns at a double vertex.}
\label{Double-vertex-pattern}
\end{figure}

An {\em Eulerian walk} is a balanced closed walk whose directions
have the same orientation on repeated edges of each fixed edge. A
midway-back avoided walk with positive sign is necessarily a
directed Eulerian walk and has no triple vertices. An Eulerian walk
with a direction is called a {\em directed Eulerian walk}.

An Eulerian walk $W$ is said to be {\em minimal} if there is no
Eulerian walk $W'$ such that $W'$ is contained properly in $W$ as
edge multisets, and said to be {\em elementary} if it is minimal and
there is no minimal Eulerian walk $W'$ such that $\supp W'$ is
properly contained in $\supp W$ as edge subsets. A minimal Eulerian
walk with a direction is called a {\em minimal directed Eulerian
walk}.

Let $(W,\varepsilon_W)$ be a directed closed walk of length $n$,
where $W$ is given in \eqref{Vertex-Edge-Seq}. The {\em
characteristic vector} of $(W,\varepsilon_W)$ on
$(\Sigma,\varepsilon)$ is a function $f_{(W,\,\varepsilon_W)}:
E\rightarrow{\Bbb Z}$ defined by
\begin{equation}\label{FW}
f_{(W,\,\varepsilon_W)}(x)=\sum_{x_i\in W,\,x_i=x}
[\varepsilon,\varepsilon_W](x_i).
\end{equation}
By Lemma~\ref{Walk-to-Flow}, $f_{(W,\,\varepsilon_W)}$ is an
integral flow on $(\Sigma,\varepsilon)$. Whenever
$\varepsilon_W=\varepsilon$ on $W$, we simply write
$f_{(W,\,\varepsilon_W)}$ as $f_W$.

Given a real-valued function $f$ on $E$. Let $\varepsilon_f$ be the
orientation on $\Sigma$ defined by
\begin{equation}\label{Orinetation-f}
\varepsilon_f(u,x)=\left\{\begin{array}{rl} -\varepsilon(u,x) &
\mbox{if $f(e)<0$, $x=uv$,}\\
\varepsilon(u,x) & \mbox{otherwise.}
\end{array}\right.
\end{equation}
It is trivial that $f$ is a flow on $(\Sigma,\varepsilon)$ if and
only if the absolute value function $|f|$ is a flow on
$(\Sigma,\varepsilon_f)$. Moreover,
$|f|=[\varepsilon,\varepsilon_f]\cdot f$.

A {\em cycle-tree} of $\Sigma$ is a connected signed subgraph $T$
which can be decomposed into edge-disjoint cycles $C_i$ (called {\em
block cycles}) and vertex-disjoint simple paths $P_j$ (called {\em
block paths}), denoted $T=\{C_i,P_j\}$, satisfying the four
conditions:
\begin{enumerate}
\item[(i)] $\{C_i\}$ is the collection of all cycles in $T$.

\item[(ii)] The intersection of two cycles is either empty or a single
vertex (called an {\em intersection vertex}).

\item[(iii)] Each $P_j$ intersects exactly two cycles and the
intersections are exactly the initial and terminal vertices of $P_j$
(also called {\em intersection vertices}).

\item[(iv)] Each intersection vertex is a {\em cut-point} (a vertex
whose removal increases the number of connected components of the
underline graph as a topological space of $1$-dimensional CW
complex), also known as a {\em separating vertex}
\cite[p.119]{Bondy-Murty1}.
\end{enumerate}
A cycle-tree is said to be {\em Eulerian} if it further satisfies
\begin{enumerate}
\item[(v)]
{\em Parity Condition}: Each balanced cycle has even number of
intersection vertices, while each unbalanced cycle has odd number of
intersection vertices.
\end{enumerate}

We call a block cycle in a cycle-tree to be an {\em end-block cycle}
if it has exactly one intersection vertex. The name cycle-tree is
justified as follows: if one converts each block cycle $C_i$ into a
vertex, each common intersection vertex of two block cycles into an
edge adjacent with the two vertices converted from the two block
cycles, and keep each block path $P_j$ connecting two vertices
converted from the two block cycles connected by $P_j$, then the
graph so obtained is indeed a tree.

An orientation $\varepsilon_T$ on a cycle-tree $T$ is called a {\em
direction} if $(T,\varepsilon_T)$ has neither sink nor source, and
for each block cycle $C$, the restriction $(C,\varepsilon_T)$ has
either sink or source at each cut-point of $T$ on $C$. We shall see
that $T$ admits a direction if and only if $T$ satisfies the Parity
Condition, and the direction is unique up to opposite sign. An
Eulerian cycle-tree $T$ with a direction $\varepsilon_T$ is called a
{\em directed Eulerian cycle-tree} $(T,\varepsilon_T)$. For
instance, the oriented signed graph given in
Figure~\ref{Cycle-Tree-Exmp} is an Eulerian cycle-tree with a
direction.
\begin{figure}[h]
\includegraphics[width=90mm]{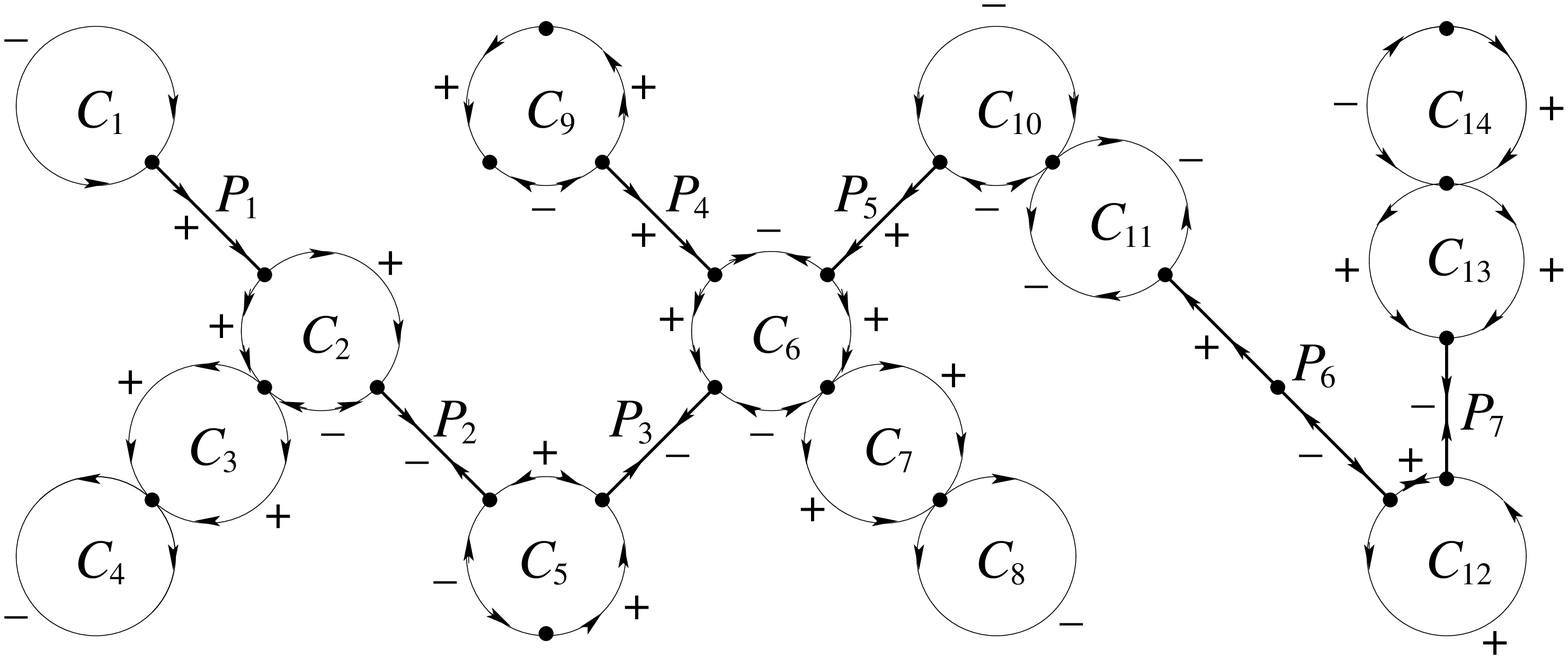}
\caption{An Eulerian cycle-tree and its direction.}
\label{Cycle-Tree-Exmp}
\end{figure}

Let $T=\{C_i,P_j\}$ be an Eulerian cycle-tree of $\Sigma$. The {\em
indicator} of $T$ is a function $I_T : E\rightarrow\Bbb Z$ defined
by
\begin{eqnarray}\label{Indicator-Function}
I_T(e)=\left\{
\begin{array}{ll}
1 & \hbox{if $e$ is on a block cycle of $T$,} \\
2 & \hbox{if $e$ is on a block path of $T$,}\\
0 & \hbox{otherwise.}
\end{array}
\right.
\end{eqnarray}
Given a direction $\varepsilon_T$ of $T$. Viewing both $(\Sigma,
\varepsilon)$ and $(T,\varepsilon_{T})$ as oriented signed subgraphs
of $\Sigma$, we have the coupling $[\varepsilon,\varepsilon_T]$. The
product function $[\varepsilon,\varepsilon_T]\cdot I_T$ determines a
vector in $\mathbb{Z}^E$ and is an integral flow of
$(\Sigma,\varepsilon)$ by
Theorem~\ref{Min-Directed-Eulerian-Walk-to-Eulerian-Cycle-Tree},
called the \emph{characteristic vector} of the directed Eulerian
cycle-tree $(T,\varepsilon_T)$ for $(\Sigma,\varepsilon)$.

An Eulerian cycle-tree is called a {\em (signed graph) circuit} if
it does not contain properly any Eulerian cycle-tree. We shall see
that each circuit $C$ must be one of the following three types.
\begin{itemize}
\item {\em
Type I}: $C$ consists of a single balanced cycle.

\item {\em Type II}: $C$ consists of two edge-disjoint unbalanced cycles $C_1,C_2$
having a single common vertex, written $C=C_1C_2$.

\item {\em Type III}: $C$ consists of two vertex-disjoint unbalanced cycles
$C_1,C_2$, and a simple path $P$ of positive length, such that
$C_1\cap P$ is the initial vertex and $C_2\cap P$ the terminal
vertex of $P$, written $C=C_1PC_2$.
\end{itemize}
The present definition of circuit seems different from that defined
in \cite{Zaslavsky1} and that adopted in \cite{Chen and Wang 1, Chen
and Wang 2}, but they are equivalent. The following characterization
of signed graph circuits shows the motivation of the
concept.\vspace{1ex}

\noindent \textsc{Characterization of Signed Graph Circuits.} Let
$f$ be a nontrivial integral flow of $(\Sigma,\varepsilon)$. Then
the following statements are equivalent.
\begin{enumerate}
\item[(a)] $f$ is elementary.

\item[(b)] $f$ is the characteristic vector of a directed circuit.

\item[(c)] There exists an elementary directed Eulerian
walk $(W,\varepsilon_W)$ such that
\[
f=f_{(W,\,\varepsilon_W)}.
\]
\end{enumerate}
\vspace{1ex}

\noindent {\bf Remark.} The characterization of signed graph
circuits was obtained by Bouchet \cite[p.283]{Bouchet}
(Corollary~2.3), using Zaslavsky's definition of circuits
\cite{Zaslavsky1}. As Zaslavsky pointed out himself, the central
observation of \cite[p.53]{Zaslavsky1} is the existence of a matroid
over the edge set of a signed graph whose circuits are exactly those
of Types I, II, III. Bouchet \cite{Bouchet} assumed (without
argument) that Zaslavsky's matroid is the same as the matroid whose
circuits are the supports of elementary flows. Indeed, it is trivial
to see that the circuits of the former are the circuits of the
latter. However, the converse seems not so obvious that no need
argument, though it is anticipated.
Corollary~\ref{Equiv-of-Elementary} implies that the converse is
indeed true. Now it is logically clear and aesthetically complete
that the matroid constructed by Zaslavsky \cite{Zaslavsky1} for a
signed graph is the same matroid whose circuits are the supports of
elementary chains (= elementary flows) of the signed graph in the
sense of Tutte \cite{Tutte3}; so are their dual matroids.

\vspace{1ex}

\noindent {\bf Main Theorem.} (Classification of Indecomposable
Integral Flows){\bf .} {\em Let $f$ be an integral flow of an
oriented signed graph $(\Sigma,\varepsilon)$.

(a) Then $f$ is indecomposable if and only if there exists an
Eulerian cycle-tree $T$ such that
\[
f=[\varepsilon,\varepsilon_f]\cdot I_T.
\]

(b) If $T$ is an Eulerian cycle-tree other than a circuit, then for
each closed walk $W$ of minimum length that uses all edges of $T$,
there is a decomposition
\[
W=C_0P_1C_1\cdots P_kC_kP_{k+1}, \sp k\geq 1,
\]
where $C_i$ are entire end-block cycles of $T$ and
$C_{i}P_{i+1}C_{i+1}$ are circuits of Type III with $C_{k+1}=C_0$,
such that
\[
I_T=\frac{1}{2}\sum_{i=0}^k I_{\Sigma(C_{i}P_{i+1}C_{i+1})}.
\]}

\section{Flow Reduction Algorithm}

The decomposability of an integral flow $f$ on
$(\Sigma,\varepsilon)$ is equivalent to the decomposability of the
flow $|f|$ on $(\Sigma,\varepsilon_f)$. So without loss of
generality, to decompose an integral flow, one only needs to
consider nonnegative nontrivial integral flows of
$(\Sigma,\varepsilon)$.
The following {\em Flow Reduction Algorithm} ({\em FRA}) finds
explicitly a minimal directed Eulerian walk from a given nontrivial
integral flow.

Let us first show that the characteristic vector of a directed
closed walk is an integral flow.

\begin{lemma}\label{Walk-to-Flow}
Let $(W,\varepsilon_W)$ be a directed closed walk. Then the function
$f_{(W,\,\varepsilon_W)}$ defined by \eqref{FW} is an integral flow
of $(\Sigma,\varepsilon)$.
\end{lemma}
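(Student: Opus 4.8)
The plan is to verify directly that the boundary $\partial f_{(W,\,\varepsilon_W)}$ vanishes at every vertex; integrality needs no work, since each value of $f_{(W,\,\varepsilon_W)}$ is by \eqref{FW} a finite sum of terms equal to $\pm1$.

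\emph{Step 1: rewrite the boundary value.} Substituting \eqref{FW} into $(\partial f)(v)=\sum_{e\in E}{\bm m}_{v,e}f(e)$ and interchanging the finite sums gives
\[
(\partial f_{(W,\,\varepsilon_W)})(v)=\sum_{i=1}^{n}{\bm m}_{v,x_i}\,[\varepsilon,\varepsilon_W](x_i).
\]
On the single occurrence $x_i$ one has $[\varepsilon,\varepsilon_W](x_i)=\varepsilon(w,x_i)\varepsilon_W(w,x_i)$ for an end $w$ of $x_i$, well defined because $\varepsilon(u_{i-1},x_i)\varepsilon(u_i,x_i)=-\sigma(x_i)=\varepsilon_W(u_{i-1},x_i)\varepsilon_W(u_i,x_i)$ with all factors $\pm1$. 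Using ${\bm m}_{v,e}=\varepsilon(v,e)$ for a non-loop, $2\varepsilon(v,e)$ for a negative loop and $0$ otherwise, a short case check shows that for every $i$ the $i$-th summand equals
\[
\varepsilon_W(u_{i-1},x_i)\,[u_{i-1}=v]+\varepsilon_W(u_i,x_i)\,[u_i=v],
\]
i.e. the sum of the direction values of those of the two ends of $x_i$ that land on $v$. (For a negative loop the two ends carry the same value, which together with the coefficient $2$ reproduces this; for a positive loop the two ends carry opposite values summing to $0$, matching ${\bm m}_{v,e}=0$, and this also renders harmless the ambiguity of $[\varepsilon,\varepsilon_W]$ at a positive loop, since there it is multiplied by $0$.)

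\emph{Step 2: cancel in pairs.} After Step~1, $(\partial f_{(W,\,\varepsilon_W)})(v)$ is the sum over all incidences $(u_{i-1},x_i)$, $(u_i,x_i)$ of the walk sitting at $v$ of their direction values. Group the $2n$ incidences of $W$ into the $n$ consecutive ``turn pairs'' $\{(u_i,x_i),(u_i,x_{i+1})\}$ for $i=1,\ldots,n-1$ together with the closing pair $\{(u_n,x_n),(u_0,x_1)\}$; each incidence lies in exactly one pair and each pair lies at one vertex, so these pairs restrict to a partition of the incidences at $v$. In each pair the two direction values cancel: $\varepsilon_W(u_i,x_i)+\varepsilon_W(u_i,x_{i+1})=0$ by the second identity in the definition of a direction, and $\varepsilon_W(u_n,x_n)+\varepsilon_W(u_0,x_1)=0$ because $u_0=u_n$ and $(W,\varepsilon_W)$ is a \emph{directed closed} walk. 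Hence $(\partial f_{(W,\,\varepsilon_W)})(v)=0$ for every $v$.

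The only point requiring care is the occurrence-by-occurrence bookkeeping for loops in Step~1 --- reconciling the coefficients $2\varepsilon(v,e)$, $0$ and the two-valuedness of $\varepsilon$ at the foot of a positive loop with the uniform ``sum over the two ends at $v$'' expression. Once that is settled, the rest is the purely combinatorial pairing of Step~2, which uses nothing beyond the two identities defining a directed closed walk.
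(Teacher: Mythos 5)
Your proof is correct and follows essentially the same route as the paper's: both reduce $(\partial f_{(W,\varepsilon_W)})(v)$ to the sum of $\varepsilon_W$ over the incidences of $W$ at $v$ and then cancel these in consecutive turn pairs plus the closing pair, using the two defining identities of a directed closed walk. Your Step~1 is merely more explicit than the paper about the loop bookkeeping (the coefficients $2\varepsilon(v,e)$ and $0$ and the two-valuedness of $\varepsilon$ at a positive loop), which the paper handles implicitly via the multiset form of $\partial$.
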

\begin{proof} Let $W=u_0x_1u_1x_2\ldots x_nu_{n}$ be the vertex-edge sequence,
where each edge $x_i$ has end-vertices $u_{i-1},u_{i}$, and
$u_{n}=u_0$. Fix a vertex $v\in V$,
let $u_{a_1},u_{a_2},\ldots,u_{a_k}$ be the
sequence that $v$ appears in $W$. Since
$\varepsilon_W(u_{a_j},x_{a_j})
+\varepsilon_W(u_{a_j},x_{a_j+1})=0$, we have
\begin{eqnarray*}
\partial f_{(W,\varepsilon_W)} (v) &=& \sum_{x\in E,\,u\in{\rm
End}(x),\,u=v}
\varepsilon(u,x)f_{(W,\varepsilon_W)}(x) \\
&=& \sum_{x\in E,\,u\in{\rm End}(x),\,u=v \atop x_i\in W,\, x_i=x}
\varepsilon(u,x) [\varepsilon,
\varepsilon_W](x_i)\\
&=& \sum_{x_i\in W,\, u\in{\rm End}(x_i),\, u=v} \varepsilon_W(u,x_i) \\
&=& \sum_{j=1}^k \big[\varepsilon_W(u_{a_j},x_{a_j})
+\varepsilon_W(u_{a_j},x_{a_j+1})\big] = 0.
\end{eqnarray*}
Hence the function $f_{(W,\,\varepsilon_W)}$ is an integral flow of
$(\Sigma,\varepsilon)$.
\end{proof}

\noindent \textbf{Flow Reduction Algorithm (FRA).} Given a
nontrivial integral flow $f$ on $(\Sigma,\varepsilon)$.

{\sc Step~0.} {\it Choose an edge $x_1$ in $\supp f$ with
end-vertices $u_0,u_1$. Initiate a half-closed and half-open walk
$u_0x_1$. Set $W:=u_0x_1$ and $\ell:=1$. Go to {\sc Step~1}.}
\vspace{1ex}

{\sc Step~1.} {\it If $u_\ell\not\in W$, go to {\sc Step~2}. If
$u_\ell\in W$, say, $u_\ell=u_\beta$ with the greatest index
$\beta<\ell$, go to {\sc Step~3}.}\vspace{1ex}

{\sc Step~2.} {\it There exists an edge $x_{\ell+1}$ in $\supp f'$
other than $x_\ell$, where $f':=f-f_{(W,\,\varepsilon_f)}$, having
end-vertices $u_\ell,u_{\ell+1}$ such that
$\varepsilon_f(u_\ell,x_{\ell+1})=-\varepsilon_f(u_\ell,x_\ell)$.
Set $W:=Wu_\ell x_{\ell+1}$ and $\ell:=\ell+1$. Return to {\sc
Step~1}.}\vspace{1ex}

{\sc Step~3.} {\it If $u_\beta$ is a double point of $W$, say,
$u_\alpha=u_\beta$ with $\alpha<\beta<\ell$, {\sc Stop}. For the
case $\varepsilon_f(u_\ell,x_\ell) =
-\varepsilon_f(u_\beta,x_{\beta+1})$, set
\begin{equation}\label{beta-ell}
W:=u_\beta x_{\beta+1}u_{\beta+1}\ldots u_{\ell-1}x_\ell u_\ell.
\end{equation}
For the case $\varepsilon_f(u_\ell,x_\ell)
=\varepsilon_f(u_\beta,x_{\beta+1})$, set
\begin{equation}\label{alpha-ell}
W:=u_\alpha x_{\alpha+1}u_{\alpha+1}\ldots u_{\ell-1}x_\ell u_\ell.
\end{equation}
Then $(W,\varepsilon_f)$ is a {\rm directed Eulerian walk}. If
$u_\beta$ is a single point of $W$, go to {\sc Step~4}.}

{\sc Step~4.} {\it If there exist double vertices
$u_\alpha,u_\gamma$ in $W$ with $\alpha<\beta<\gamma$ such that
$u_\alpha=u_\gamma$, {\sc Stop}. For the case
$\varepsilon_f(u_\ell,x_\ell) =-\varepsilon_f(u_\beta,x_{\beta+1})$,
set $W$ to be of (\ref{beta-ell}). For the case
$\varepsilon_f(u_\ell,x_\ell) = \varepsilon_f(u_\beta,x_{\beta+1})$,
set
\begin{equation}\label{beta-alpha-gamma-ell}
W:=u_\beta x_{\beta} u_{\beta-1} \ldots u_{\alpha+1}
x_{\alpha+1}u_\alpha (u_\gamma) x_{\gamma+1} u_{\gamma+1} \ldots
u_{\ell-1}x_\ell u_\ell.
\end{equation}
Then $(W,\varepsilon_f)$ is a {\rm directed Eulerian walk}.
Otherwise, go to {\sc Step~5}.}

{\sc Step~5.} {\it If $\varepsilon_f(u_\ell,x_\ell) = -\varepsilon_f
(u_\beta,x_{\beta+1})$, {\sc stop}. Set $W$ to be of
(\ref{beta-ell}). Then $(W,\varepsilon_f)$ is a directed Eulerian
walk. If $\varepsilon_f(u_\ell,x_\ell) =
\varepsilon_f(u_\beta,x_{\beta+1})$, return to {\sc \sc
Step~2}.}\vspace{2ex}

It is clear from {\sc Step~3} that the multiplicity of each vertex
in the closed walk $W$ (obtained by FRA) is at most two. So $W$ has
only possible double vertices and possible double edges. At each
double vertex of $W$, say $u_\alpha=u_\beta$ with $\alpha<\beta$,
{\sc Step~5} implies
\begin{equation}\label{Double-Point-Pattern}
\varepsilon(u_\beta,x_\beta) = \varepsilon_W(u_\alpha,x_{\alpha+1})
= -\varepsilon_W(u_\alpha,x_{\alpha}) =
-\varepsilon_W(u_\beta,x_{\beta+1});
\end{equation}
see Figure~\ref{Double-vertex-pattern}. It is possible that
$(u_\beta,x_{\beta+1})=(u_\alpha,x_\alpha)$; if so, the repeated
edges $x_\alpha,x_{\beta+1}$ have the same orientation in
$(W,\varepsilon_W)$. This means that $(W,\varepsilon_W)$ is a
directed Eulerian walk.

In {\sc Step~2}, both functions $f_W,f'$ are not conservative at
$u_\ell$. In fact, if $u_\ell\neq u_0$, then $\partial
f_{(W,\,\varepsilon_f)}(u_\ell)=\varepsilon_f(u_\ell,x_\ell)$ and
$\partial f'(u_\ell)=-\varepsilon_f(u_\ell,x_\ell)$; if
$u_\ell=u_0$, then $\partial f_{(W,\,\varepsilon_f)}
(u_\ell)=2\varepsilon_f(u_\ell,x_\ell) =2\varepsilon_f(u_0,x_1)$.
Hence $\partial f_{(W,\,\varepsilon_f)}(u_\ell)\neq 0$. This means
that there exists an edge $x_{\ell+1}$ in $\supp f'$ at $u_\ell$
such that $\varepsilon_f(u_\ell,x_{\ell+1})
=-\varepsilon_f(u_\ell,x_\ell)$. Thus the length of $W$ is increased
by one. Since the multiset $(E,|f|)$ is finite, FRA stops with a
directed closed walk $(W,\varepsilon_f)$. Moreover, {\sc Step~4}
implies that each double vertex in $W$ (obtained by FRA) is a
cut-point of $\Sigma(W)$.

\begin{lemma}\label{W-W-Inverse}
Let $W$ be a directed walk. 
Then FRA finds no directed closed walk along $W$ if and only if FRA
finds no directed closed walk along $W^{-1}$.
\end{lemma}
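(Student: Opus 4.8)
The plan is to reduce everything to the elementary symmetry that reversing a directed walk does not change its direction function $\varepsilon_f$ — it is a function on vertex–edge pairs, not on oriented edges — nor the pattern of which vertices of the walk coincide. Since $(W^{-1})^{-1}=W$, it suffices to prove one implication; I would show that if FRA finds no directed closed walk along $W$ then it finds none along $W^{-1}$, and then feed $W^{-1}$ into this statement to obtain the converse.

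First I would check that every branching decision FRA makes is invariant under reversing the current walk. The extension rule of {\sc Step~2} asks only for an edge $x$ at the terminal vertex $u$ with $x$ in the support of the residual flow $f'$ and $\varepsilon_f(u,x)$ opposite to the $\varepsilon_f$-value at $u$ of the last edge of the current walk; this set of admissible edges is read off the same data whether one reads $W$ or $W^{-1}$ up to its terminal vertex. The orientation pattern \eqref{Double-Point-Pattern} forced at a double vertex $u_\alpha=u_\beta$ is symmetric under $\alpha\leftrightarrow\beta$, hence unaffected by reversal. The three configurations that make FRA {\sc Stop} in {\sc Steps~3--5} — a would-be triple vertex, a sandwiched revisit $u_\alpha=u_\gamma$ with $\alpha<\beta<\gamma$, and the orientation test $\varepsilon_f(u_\ell,x_\ell)=-\varepsilon_f(u_\beta,x_{\beta+1})$ — involve only vertex coincidences and values of $\varepsilon_f$, so they too survive reversal, the indices being relabelled $i\mapsto n-i$. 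Finally the rewritings FRA performs — passing to a contiguous sub-walk as in \eqref{beta-ell} or \eqref{alpha-ell}, or the re-splicing \eqref{beta-alpha-gamma-ell} around a coincidence $u_\alpha=u_\gamma$ — commute with reversal: the reverse of a contiguous sub-walk of $W$ is a contiguous sub-walk of $W^{-1}$, and \eqref{beta-alpha-gamma-ell} is a palindromic re-splicing, so reversing it produces exactly the re-splicing FRA would carry out on $W^{-1}$ at the reversed vertex.

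With these invariances in hand, the content of the lemma is that the outcome of FRA — closing up, versus running off the end of the walk without closing — is independent of which end of the walk is active. The delicate point is that FRA always acts at the terminal end, so it meets the repeated vertices of $W$ in the order opposite to that for $W^{-1}$; the two computations are mirror images only as unordered families of intermediate walks, not step by step. I would deal with this by induction on the number of edges of $W$. The base case is a walk with no repeated vertex, where FRA merely retraces the walk and never enters {\sc Steps~3--5}, for $W$ and for $W^{-1}$ alike. For the inductive step, take any reduction FRA may legally apply while following $W$; by the invariances above, the reverse of the resulting walk $W'$ is reachable by FRA while following $W^{-1}$, and — provided one knows that FRA's final verdict does not depend on the order in which the eligible reductions are performed — the inductive hypothesis applied to $W'$ and ${W'}^{-1}$ closes the induction.

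I expect the real obstacle to be precisely that order-independence, namely the bookkeeping around the reductions. One must check that the prefix of $W$ discarded by a {\sc Step~3} or {\sc Step~5} reduction is inert — it contains no unresolved repeated vertex that could later force, or prevent, a {\sc Stop} — which comes down to the ``greatest index $\beta$'' selection of {\sc Step~1}; and one must record the palindromy of \eqref{beta-alpha-gamma-ell}, which is what makes the re-splicing commute with reversal. Once these two points are in place, the matching of the two computations is complete and the stated equivalence follows.
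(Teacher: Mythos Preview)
Your symmetry observations in the opening paragraph are correct and are essentially the whole argument: the data FRA consults---vertex coincidences in the walk and values of $\varepsilon_f$ at vertex--edge pairs---are unchanged by reversal, and the three {\sc Stop} clauses of {\sc Steps~3--5} are phrased entirely in that data. The trouble is the inductive scheme you then erect, which rests on a misreading of what ``FRA along $W$'' does. There are no intermediate reductions: FRA simply traces $W$ edge by edge, and at each repeated vertex tests the {\sc Stop} conditions; the rewritings \eqref{beta-ell}--\eqref{beta-alpha-gamma-ell} are applied exactly once, at the moment FRA halts, to manufacture the output closed walk. Until that moment the current walk is always a prefix of $W$. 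So there is nothing to induct on, no shorter $W'$ produced along the way, and the ``order-independence of reductions'' you identify as the real obstacle is a phantom---there is no order of reductions to permute.

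The paper's proof takes the direct route your first paragraph already points to. It isolates three explicit configurations in $W$ (one for each {\sc Stop} clause) whose presence is exactly what makes FRA halt, checks that each configuration read backwards is a configuration of the same type sitting inside $W^{-1}$, and then argues: running FRA along $W^{-1}$, either it halts before reaching the reversed configuration, or it reaches it and halts there. The converse is your $(W^{-1})^{-1}=W$ observation. What you should take away is that your first paragraph, once you recognize that ``FRA finds a directed closed walk along $W$'' is the static statement ``$W$ contains a {\sc Stop} configuration,'' already finishes the proof; the induction and the order-independence discussion can simply be deleted.
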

\begin{proof}
This seems to be quite trivial. In fact, let FRA find a directed
closed walk along $W$. Then $W$ contains one of the three patterns
of closed walks: (a), (b) and (c) in Figure~\ref{Three-patterns},
where $\alpha<\beta<\gamma<\delta$, $\varepsilon_W(u_\beta,x_\beta)
= -\varepsilon_W(u_\alpha,x_{\alpha+1})$ in (a),
$\varepsilon_W(u_\gamma,x_\gamma) =
-\varepsilon_W(u_\alpha,x_{\alpha+1})$ in (b), and
$\varepsilon_W(u_\delta,x_\delta) =
-\varepsilon_W(u_\alpha,x_{\alpha+1})$ in (c). The reversions of
patterns (a), (b) and (c), as subwalks in $W^{-1}$, have the same
patterns as (a), (b) and (c) respectively. The subwalks from
$u_\alpha$ to $u_\beta$ in (a), (b), (c) may contain some double
vertices and double edges; so do the subwalks from $u_\beta$ to
$u_\gamma$ in (b) and (c); so does the subwalk from $u_\gamma$ to
$u_\delta$ in (c).

Note that when FRA is applied to $W$, the algorithm may stop and
find a directed closed walk before it reaches $u_\beta$ in (a), or
before it reaches $u_\gamma$ in (b), or before it reaches $u_\delta$
in (c). If so, when FRA is applied to $W^{-1}$, the algorithm stops
and finds a directed closed walk along $W^{-1}$ before it reaches
$u_{\delta^{-1}}$, or $u_{\gamma^{-1}}$, or $u_{\beta^{-1}}$, or
$u_{\alpha^{-1}}$. If not, when FRA is applied to $W^{-1}$, the
algorithm stops and finds a directed closed walk when it reaches
$u_{\alpha^{-1}}$. This means that FRA finds a directed closed walk
along $W^{-1}$.
\begin{figure}[h]
\subfigure[]{\includegraphics[height=40mm]{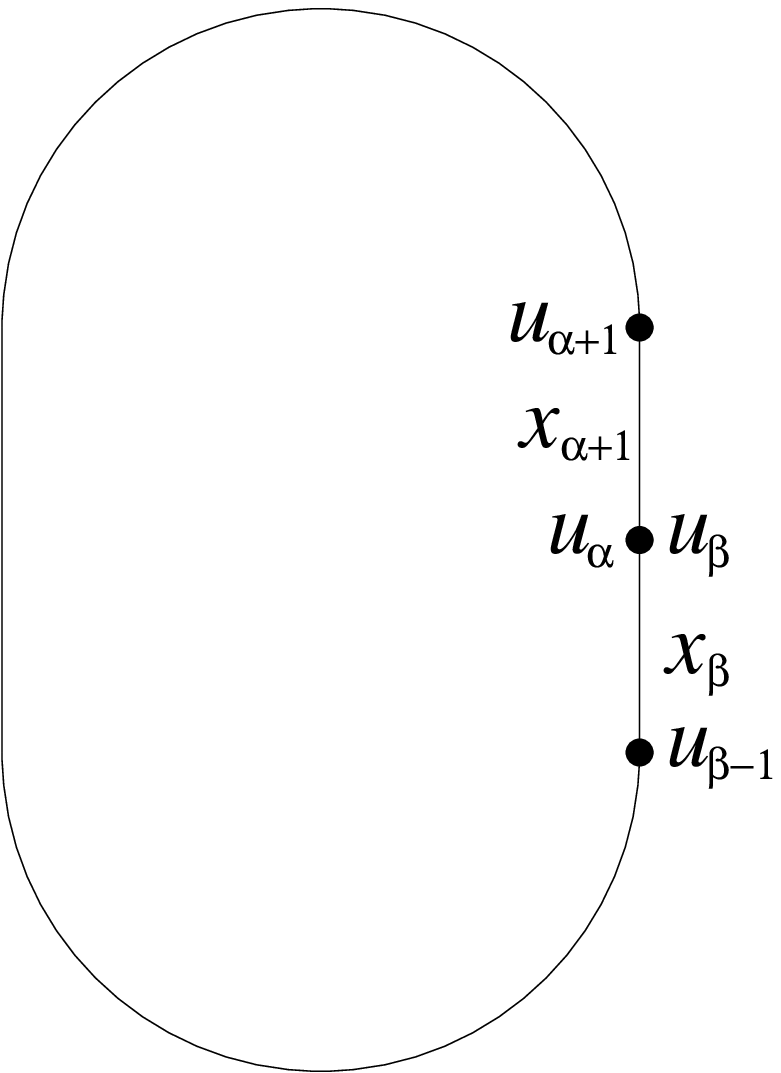}}\hspace{5mm}
\subfigure[]{\includegraphics[height=40mm]{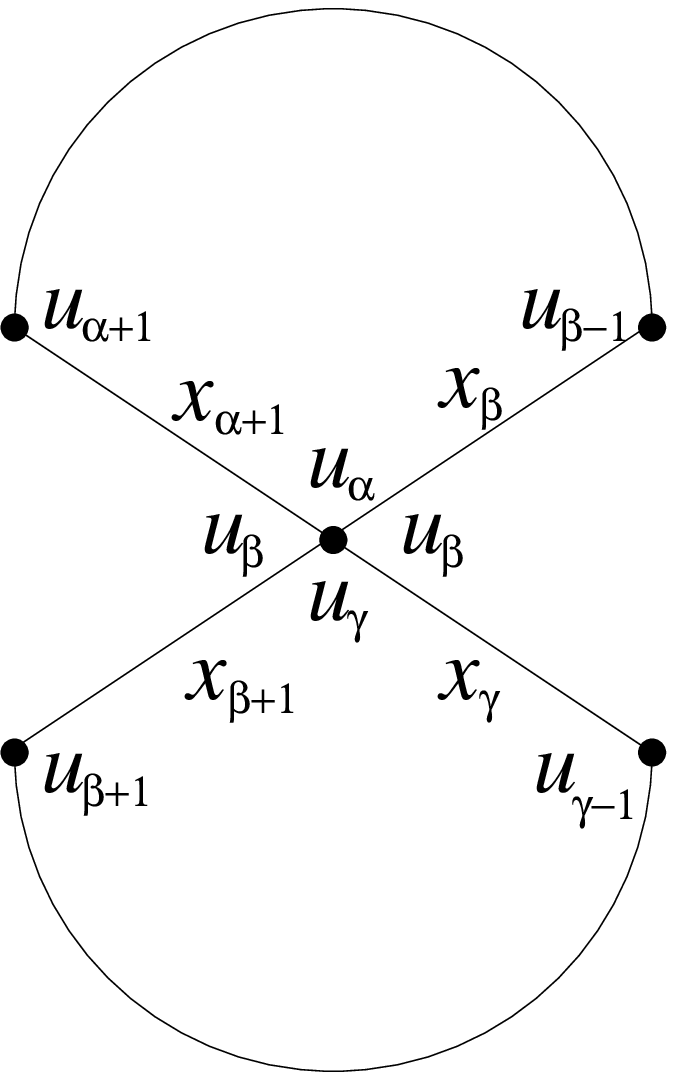}}\hspace{5mm}
\subfigure[]{\includegraphics[height=40mm]{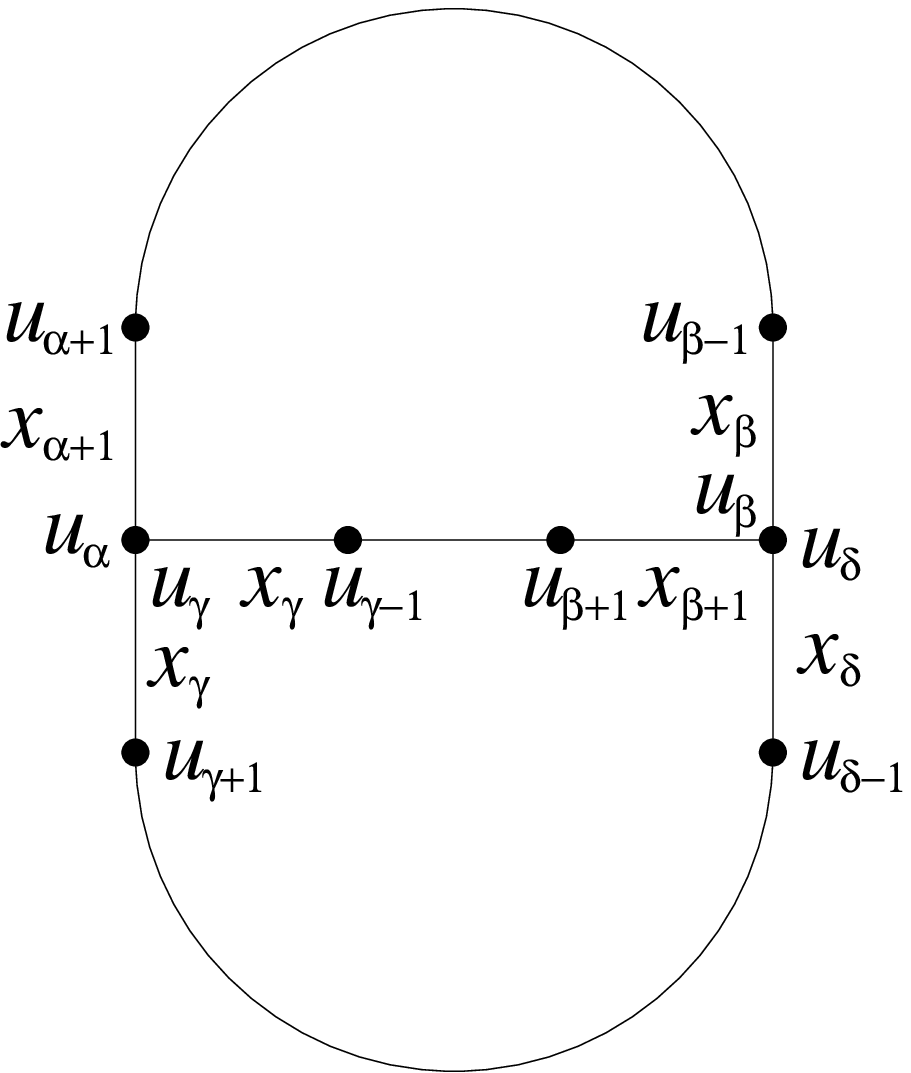}}
\caption{Three patterns that FRA stops.} \label{Three-patterns}
\end{figure}

Conversely, let FRA find a directed closed walk along $W^{-1}$. Then
FRA finds a directed closed walk along $(W^{-1})^{-1}$, which is
exactly the walk $W$. We have seen that FRA finds a directed closed
walk along $W$ if and only if FRA finds a directed closed walk along
$W^{-1}$.
\end{proof}

\begin{lemma}\label{Midway-to-Eulerian-No-Triple}
Let $(W,\varepsilon_W)$ be a directed midway-back avoided walk. Then
\begin{enumerate}
\item[(a)] $W$ has no triple vertices, that is, the multiplicity of
each vertex and of each edge in $W$ is at most two.

\item[(b)] $(W,\varepsilon_W)$ is a directed Eulerian walk.
\end{enumerate}
\end{lemma}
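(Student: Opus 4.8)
The plan is to prove (a) first, by a short sign computation at a hypothetical triple vertex, and then to read off (b) from (a) together with the two defining relations of a direction.

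For (a), I would assume toward a contradiction that some vertex occurs at three positions of the closed walk $W$, say $u_\alpha=u_\beta=u_\gamma$ with $0\le\alpha<\beta<\gamma<n$. Applying the midway-back avoided condition to the three pairs $(\alpha,\beta)$, $(\beta,\gamma)$ and $(\alpha,\gamma)$ gives
\[
\varepsilon_W(u_\beta,x_\beta)=\varepsilon_W(u_\alpha,x_{\alpha+1}),\qquad
\varepsilon_W(u_\gamma,x_\gamma)=\varepsilon_W(u_\beta,x_{\beta+1}),\qquad
\varepsilon_W(u_\gamma,x_\gamma)=\varepsilon_W(u_\alpha,x_{\alpha+1}).
\]
The first and third identities give $\varepsilon_W(u_\beta,x_\beta)=\varepsilon_W(u_\gamma,x_\gamma)$, while the relation $\varepsilon_W(u_\beta,x_\beta)+\varepsilon_W(u_\beta,x_{\beta+1})=0$ rewrites the middle identity as $\varepsilon_W(u_\gamma,x_\gamma)=-\varepsilon_W(u_\beta,x_\beta)$. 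Hence $\varepsilon_W(u_\beta,x_\beta)=-\varepsilon_W(u_\beta,x_\beta)$, which is impossible since the values are $\pm1$; so no vertex of $W$ occurs three times. Because a direction never lets the walk traverse an edge and immediately return along it, an edge occurring three times would put each of its end-vertices at three distinct positions, which has just been excluded; hence edges also have multiplicity at most two, and (a) follows.

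For (b), recall that a directed Eulerian walk is a balanced closed walk carrying a direction that is single-valued on the two copies of every repeated edge, so three things must be checked. First, $W$ is closed by hypothesis. Second, $W$ is balanced: putting $t_i:=\varepsilon_W(u_{i-1},x_i)$, the relation $\varepsilon_W(u_{i-1},x_i)\varepsilon_W(u_i,x_i)=-\sigma(x_i)$ gives $\varepsilon_W(u_i,x_i)=-\sigma(x_i)t_i$, hence $t_{i+1}=\sigma(x_i)t_i$ and $\varepsilon_W(u_n,x_n)=-\big(\prod_{i=1}^n\sigma(x_i)\big)t_1$; the closure condition $\varepsilon_W(u_0,x_1)+\varepsilon_W(u_n,x_n)=0$ then forces $\prod_{i=1}^n\sigma(x_i)=1$, i.e. $\sigma(W)=1$. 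Third, $\varepsilon_W$ agrees on the two copies of each repeated edge $e$: by (a) both end-vertices of $e$ are double vertices at which the two copies of $e$ are incident, and there the midway-back avoided condition together with $\varepsilon_W(u_i,x_i)+\varepsilon_W(u_i,x_{i+1})=0$ forces the local picture to be one of the four patterns of Figure~\ref{Double-vertex-pattern}, in each of which the two copies of $e$ carry the same direction. Hence $(W,\varepsilon_W)$ is a directed Eulerian walk.

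The main obstacle is the last step of (b): one must rule out the configuration in which a repeated edge is traversed with the same ordered pair of end-vertices at both of its occurrences, since that would give its two copies opposite directions and would not appear among the patterns of Figure~\ref{Double-vertex-pattern}. Excluding it is precisely where midway-back-avoidedness is invoked once more — at an end-vertex of such an edge $e$ both visits leave along $e$, and the midway-back condition combined with the no-backtracking relation again yields a sign identity of the impossible form $s=-s$. The remaining degeneracies (loops, and the case in which both occurrences of a repeated edge meet at a single double vertex) reduce to the same $\pm1$ bookkeeping and raise no new difficulty.
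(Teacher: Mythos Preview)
Part (a) is correct and is exactly the paper's argument.

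Part (b) has a genuine gap, and it is precisely the one you yourself flag at the end. When a repeated edge $e=x_i=x_j$ (with $i<j$) is traversed twice with the \emph{same} ordered endpoints, so that $u_{i-1}=u_{j-1}$ and $u_i=u_j$, the midway-back condition at the double vertex $u_{i-1}=u_{j-1}$ compares the \emph{incoming} edge $x_{j-1}$ with the \emph{outgoing} edge $x_i$; it never relates $x_j$ to $x_i$ directly, and if you carry the $\pm 1$ bookkeeping through at both endpoints you obtain a consistent system, not an identity of the form $s=-s$. Concretely, take an unbalanced triangle on $u,v,w$ with positive edges $e=uv$, $f=vw$ and one negative edge $g=wu$, and let
\[
W=u\,e\,v\,f\,w\,g\,u\,e\,v\,f\,w\,g\,u
\]
traverse it twice. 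Starting from $\varepsilon_W(u_0,x_1)=1$ one checks that $(W,\varepsilon_W)$ is a directed closed midway-back avoided walk of positive sign (the three coincidences $(0,3),(1,4),(2,5)$ all satisfy the defining identity), yet $\varepsilon_W(u_0,x_1)=1$ while $\varepsilon_W(u_3,x_4)=-1$, so the two copies of $e$ carry opposite orientations and $(W,\varepsilon_W)$ is \emph{not} a directed Eulerian walk.

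The paper's own proof of (b) has the same lacuna: it treats only the arrangement $x_\alpha=x_{\beta+1}$ with $u_\alpha=u_\beta$, i.e.\ the case where the two traversals of $e$ meet head-to-tail at a common endpoint, and is silent on the same-direction traversal above. Thus part (b) is false as stated. What is true, and what the paper actually uses, is that the conclusion of (b) holds once one also assumes that every double vertex of $W$ is a cut-point of $\Sigma(W)$ (condition (ii) of Theorem~\ref{Midway-Cut-Point-to-Eulerian-Cycle-Tree}); under that extra hypothesis the same-direction configuration cannot occur, since the shared endpoint $u_{i-1}=u_{j-1}$ would then fail to separate $\Sigma(W)$.
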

\begin{proof} Write $W=u_0x_1u_1x_2\cdots u_{n-1}x_nu_n$.

(a) Suppose there is a vertex appeared three times in $W$, say,
$u_\alpha=u_\beta=u_\gamma$ with $\alpha<\beta<\gamma$; see
Figure~\ref{triple-point}.
\begin{figure}[h]
\includegraphics[height=35mm]{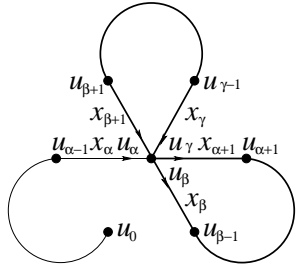}
\caption{The pattern of a triple point.}\label{triple-point}
\end{figure}
Since $(W,\varepsilon_W)$ is midway-back avoided, we have
\[
\varepsilon_W(u_\beta,x_\beta) =
\varepsilon_W(u_\alpha,x_{\alpha+1}),
\]
\[
\varepsilon_W(u_\gamma,x_\gamma) =
\varepsilon_W(u_\alpha,x_{\alpha+1}),
\]
\[
\varepsilon_W(u_\gamma,x_\gamma) =
\varepsilon_W(u_\beta,x_{\beta+1}).
\]
Then
\[
\varepsilon_W(u_\gamma,x_\gamma) = -\varepsilon_W(u_\beta,x_{\beta})
= - \varepsilon_W(u_\alpha,x_{\alpha+1}) = -
\varepsilon_W(u_\gamma,x_\gamma),
\]
which is a contradiction.

(b) Let $u_\alpha=u_\beta$ with $\alpha<\beta$ and let $x_{\beta+1}
(=x_\alpha$) be a repeated edge. Then $u_{\beta+1} = u_{\alpha-1}$.
Suppose $\varepsilon_W(u_\beta,x_{\beta+1}) = -
\varepsilon_W(u_\alpha,x_\alpha)$. Then
\[
\varepsilon_W(u_{\beta+1},x_{\beta+1}) = -
\varepsilon_W(u_{\alpha-1},x_\alpha).
\]
This means that $(W,\varepsilon_W)$ is midway-back at
$u_{\alpha-1}$, which is a contradiction. So
$\varepsilon_W(u_\beta,x_{\beta+1}) =
\varepsilon_W(u_\alpha,x_\alpha)$. This means that $\varepsilon_W$
has the same orientation on repeated edges. Thus $(W,\varepsilon_W)$
is a directed Eulerian walk.
\end{proof}

\begin{lemma}\label{FRA-Walk-to-Midway-Cut-Point}
Let $(W,\varepsilon_f)$ be a directed closed walk found by FRA. Then

{\rm (a)} $(W,\varepsilon_f)$ is midway-back avoided.

{\rm (b)} Each double vertex in $W$ is a cut-point of $\Sigma(W)$.
\end{lemma}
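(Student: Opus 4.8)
\emph{Proof proposal.} The plan is to read both assertions off the step-by-step behaviour of FRA; in effect part (a) is already asserted in the discussion preceding this lemma via \eqref{Double-Point-Pattern} (whose first equality is exactly the midway-back avoided condition) and part (b) via the remark that {\sc Step~4} forces double vertices to be cut-points, so the real work is to justify these carefully. Fix the output closed walk $W=u_0x_1u_1\cdots x_nu_n$ with direction $\varepsilon_f$ and recall that the multiplicity of every vertex and of every edge of $W$ is at most two (a third visit to a vertex would have triggered the halt in {\sc Step~3}), and that $W$ is assembled from the intermediate walks of FRA only by {\sc Step~2} extensions, by truncations to contiguous subwalks in {\sc Steps~3--5}, and by the one splice \eqref{beta-alpha-gamma-ell} of {\sc Step~4}. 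I would run an induction on the iterations of FRA carrying the invariant $(\ast)$: at the end of each iteration the current (possibly half-open) walk is midway-back avoided at every pair of coincident vertices not involving its terminal vertex, and moreover its coincident-vertex index pairs form a non-crossing (laminar) family.

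For part (a), granting $(\ast)$ the conclusion is immediate once FRA halts, because each terminating rule resolves the pending coincidences at the terminal vertex: in the non-halting branch of {\sc Step~5} the passage back to {\sc Step~2} is permitted exactly when $\varepsilon_f(u_\ell,x_\ell)=\varepsilon_f(u_\beta,x_{\beta+1})$, which is the midway-back avoided condition at the now confirmed double vertex $u_\beta=u_\ell$; and the halting branches of {\sc Steps~3--5} replace $W$ by a contiguous subwalk, or by the splice \eqref{beta-alpha-gamma-ell} of two contiguous pieces joined at an already confirmed double vertex, so that every retained coincidence had already satisfied the condition. Combined with the direction identity $\varepsilon_f(u_i,x_i)+\varepsilon_f(u_i,x_{i+1})=0$ this reproduces \eqref{Double-Point-Pattern} at every double vertex of $W$; alternatively, once \eqref{Double-Point-Pattern} is in hand part (a) is read off directly and Lemma~\ref{Midway-to-Eulerian-No-Triple} reconfirms that multiplicities are at most two.

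For part (b), let $v=u_\alpha=u_\beta$ with $\alpha<\beta$ be a double vertex of $W$ and split the closed walk at its two visits to $v$ into the arcs $W_1=u_\alpha x_{\alpha+1}\cdots x_\beta u_\beta$ and $W_2=u_\beta x_{\beta+1}\cdots x_n u_0 x_1\cdots x_\alpha u_\alpha$, each a closed walk based at $v$ of positive length, so $\Sigma(\supp W)=\Sigma(\supp W_1)\cup\Sigma(\supp W_2)$. It suffices to show these two subgraphs meet only in the point $v$, i.e. that $W_1$ and $W_2$ share no vertex besides $v$ and no edge at all; edge-disjointness then follows from multiplicity at most two together with the vertex condition, and since $v$ has total multiplicity two each $\Sigma(\supp W_i)$ remains nonempty and connected as a $1$-dimensional CW complex after the point $v$ is deleted, so deleting $v$ produces two components from one and $v$ is a cut-point in the stated topological sense. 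The vertex condition is precisely the laminarity clause of $(\ast)$ specialised to $W$: a $w\neq v$ occurring once strictly between the two visits to $v$ and once outside would give a coincident pair crossing the pair $\{\alpha,\beta\}$, which is exactly what the straddling test of {\sc Step~4} detects and halts on at the iteration confirming the later of the two occurrences, so no such $w$ survives into $W$.

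The substantive difficulty is entirely the bookkeeping across FRA's rewrites: one must check that the orientation equality granted by {\sc Step~5} and the laminarity granted by {\sc Step~4} on the intermediate walks are inherited through every later truncation and through the splice \eqref{beta-alpha-gamma-ell} — the splice being the only step that is not purely mechanical, since one must confirm its junction, its endpoints, and its interior coincidences all satisfy the midway-back equality and remain non-crossing (which follows because all the relevant edges are retained from the original walk and a crossing in the spliced walk would force a crossing of two confirmed pairs in the original, contradicting $(\ast)$). I note that adopting the paper's topological reading of ``cut-point'' is what lets loops and length-one arcs at $v$ be absorbed into part (b) without a separate case. Once $(\ast)$ is secured as a loop invariant of FRA, both parts of the lemma follow as above.
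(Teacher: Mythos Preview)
Your argument is sound and for part~(a) it coincides with the paper's: both simply observe that every double vertex surviving into $W$ was created in the non-halting branch of {\sc Step~5}, which imposes exactly the equality $\varepsilon_f(u_\beta,x_\beta)=\varepsilon_f(u_\alpha,x_{\alpha+1})$ recorded in \eqref{Double-Point-Pattern}.

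For part~(b) you take a genuinely different route. The paper does \emph{not} carry a laminarity invariant through the rewrites. Instead it first eliminates the awkward splice form \eqref{beta-alpha-gamma-ell} altogether: using Lemma~\ref{W-W-Inverse} it shows that re-running FRA on the rearranged input $W_1^{-1}W_2$ (where $W_1$ is the segment from $u_\alpha$ to $u_\gamma$ and $W_2$ the segment from $u_\gamma$ to $u_\ell$) produces the \emph{same} closed walk $W$ but now with monotonically increasing indices. Once $W$ has the straight form \eqref{W-0-1}, a direct contradiction finishes: a non-cut double vertex forces a crossing pair $u_\delta=u_\eta$, $u_\mu=u_\nu$ with $\delta<\mu<\eta<\nu$, and then {\sc Step~4} would have fired at time $\nu$ (or, in the boundary case $\nu=\ell$, at time $\eta$), contradicting that FRA did not stop before $\ell$.

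What each approach buys: the paper's reduction via Lemma~\ref{W-W-Inverse} sidesteps precisely the ``bookkeeping across FRA's rewrites'' you identify as the substantive difficulty --- after the reduction there is no splice to track through, and the contradiction is a one-line appeal to {\sc Step~4}. Your invariant-based approach is more self-contained (it does not invoke Lemma~\ref{W-W-Inverse}) and makes the structural reason transparent, but the price is that you must actually verify $(\ast)$ is preserved by the splice \eqref{beta-alpha-gamma-ell}; your sketch of this (``a crossing in the spliced walk would force a crossing of two confirmed pairs in the original'') is correct but would need the case analysis you allude to --- pairs within the reversed segment, within the forward segment, and straddling the junction --- to be written out.
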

\begin{proof}
(a) The directed walk $(W,\varepsilon_f)$ satisfies
(\ref{Double-Point-Pattern}). By definition $(W,\varepsilon_f)$ is
midway-back avoided.

(b) Assume that FRA stops at time $\ell$ and finds a directed closed
walk $(W,\varepsilon_f)$, but did not stop before $\ell$. The two
forms (\ref{beta-ell}) and (\ref{alpha-ell}) of $W$ are the same
kind, having indices increasing. However, the form
(\ref{beta-alpha-gamma-ell}) of $W$ is special; its indices from
$u_\beta$ to $u_\alpha$ are decreasing. We may reduce the form
(\ref{beta-alpha-gamma-ell}) of $W$ to the form whose indices are
increasing as follows.

Consider the directed walk $(W',\varepsilon_f)$, where $W'=W_1W_2$,
\begin{align*}
W_1 &= u_\alpha x_{\alpha+1}u_{\alpha+1}\ldots u_\beta
x_{\beta+1}u_{\beta+1} \ldots u_{\gamma-1} x_\gamma u_\gamma,
\\
W_2 &= u_{\gamma} x_{\gamma+1} u_{\gamma+1}\ldots u_{\ell-1} x_\ell
u_\ell, \sp u_\gamma=u_\alpha,\; u_\ell=u_\beta.
\end{align*}
Applying FRA to $W_1W_2$, the algorithm cannot stop before $\ell$,
but stops at time $\ell$ and finds the directed closed walk $W$. Of
course, FRA finds no directed closed walk along $W_1$. Writing
$W_1^{-1}$ in increasing-order of indices and applying FRA to
$W_1^{-1}$, by Lemma~\ref{W-W-Inverse} the algorithm finds no
directed closed walk along $W_1^{-1}$. Now applying FRA to
$W_1^{-1}W_2$, the algorithm cannot stop before $\ell$, but stops at
time $\ell$ and finds the same directed closed walk $W$, having
indices increasing.

Without loss of generality we may assume that $(W,\varepsilon_f)$
(obtained by FRA) has the form
\begin{equation}\label{W-0-1}
W=u_0x_1u_1x_2\ldots u_{\ell-1}x_\ell u_\ell, \sp u_0=u_\ell.
\end{equation}
Suppose $W$ has a double vertex $u$ that is not a cut-point of
$\Sigma(W)$, say, $u=u_\delta=u_\eta$ with $\delta<\eta$. Then there
exist vertices $u_\mu,u_\nu$ in $W$ such that $u_\mu=u_\nu$, where
$\delta<\mu<\eta$ and either $\eta<\nu$ or $\nu<\delta$. With the
indices modulo $\ell$, the closed walk $W$ can be written as the
form (see Figure~\ref{n-cut-point})
\[
W=u_\delta x_{\delta+1}u_{\delta+1}\dots x_{\mu}u_\mu
x_{\mu+1}\ldots x_{\eta}u_\eta x_{\eta+1} \ldots x_{\nu}u_{\nu}
x_{\nu+1}\ldots u_{\delta-1}x_{\delta}u_\delta.
\]
Consider the case $\delta<\mu<\eta<\nu$. If $\nu<\ell$, FRA stops in
{\sc Step~4} at time $\nu$ and finds the directed closed walk
\[
u_\mu x_\mu u_{\mu-1}\ldots u_{\delta+1}x_{\delta+1}u_\delta
(u_\eta)x_{\eta+1}u_{\eta+1}\ldots u_{\nu-1}x_{\nu}u_\nu
\]
in Figure~\ref{n-cut-point}; this is a contradiction. If $\nu=\ell$,
then $u_{\nu}x_{\nu+1}u_{\nu+1}=u_0x_1u_1$, FRA stops in {\sc Step
4} at time $\eta$ and finds the directed closed walk
\[
u_\nu x_{\nu+1} u_{\nu+1}\ldots u_{\delta-1}x_{\delta}u_\delta
(u_\eta)x_{\eta}u_{\eta-1}\ldots u_{\mu+1}x_{\mu+1}u_\mu
\]
in Figure~\ref{n-cut-point}; this is a contradiction. For the case
$\nu<\delta<\mu<\eta$, it is analogous to the case
$\delta<\mu<\eta<\nu$.
\begin{figure}[h]
\includegraphics[height=40mm]{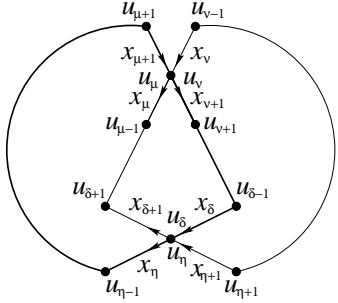}
\caption{A double vertex that is not a cut-point.}
\label{n-cut-point}
\end{figure}
\end{proof}

\begin{thm}\label{Midway-Cut-Point-to-Eulerian-Cycle-Tree}
Let $(W,\varepsilon_W)$ be a directed closed walk such that
\begin{enumerate}
\item[(i)] $(W,\varepsilon_W)$ is a directed midway-back avoided walk;

\item[(ii)] each double vertex in $W$ is a cut-point of $\Sigma(W)$.
\end{enumerate}
Then $\Sigma(W)$ is an Eulerian cycle-tree, the restriction of
$\varepsilon_W$ to $\Sigma(W)$ is a direction on the cycle-tree, and
$W$ uses each edge of block cycles once and each edge of block paths
twice, crossing from one block to the other block at each cut-point.
\end{thm}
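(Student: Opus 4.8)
The plan is to induct on the length of $W$. Since a directed closed walk breaks at a double vertex into two directed walks that are \emph{not} closed — they each acquire a sink or a source at the splitting vertex — I will run the induction on the larger class of directed midway-back avoided walks $(W,\varepsilon_W)$ that are either closed, or run from a vertex $u$ to $u$ with a sink or a source at $u$, in all cases with every double vertex a cut-point of $\Sigma(W)$; for a walk of the second kind the conclusion is that $\Sigma(W)$ is an Eulerian cycle-tree once $u$ is adjoined as one more intersection vertex, with the analogous direction and edge-count assertions.

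First come the preliminaries supplied by Lemma~\ref{Midway-to-Eulerian-No-Triple}: $(W,\varepsilon_W)$ is a directed Eulerian walk with no triple vertices, so each vertex and each edge occurs at most twice in $W$, and a repeated edge carries the same orientation at both occurrences. Combining the walk-direction identities with the midway-back identity at a double vertex $u=u_\alpha=u_\beta$, just as in \eqref{Double-Point-Pattern}, gives
\[
\varepsilon_W(u_\beta,x_\beta)=\varepsilon_W(u_\alpha,x_{\alpha+1})=-\varepsilon_W(u_\alpha,x_\alpha)=-\varepsilon_W(u_\beta,x_{\beta+1}),
\]
so the subwalks $W_1=u_\alpha x_{\alpha+1}\cdots x_\beta u_\beta$ and $W_2=u_\beta x_{\beta+1}\cdots x_\alpha u_\alpha$ (read cyclically around $W$) are directed walks from $u$ to $u$ with a source, resp.\ a sink, at $u$. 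Using this pattern and the absence of triple vertices, a repeated edge of $W$ is seen to be a bridge of $\Sigma(W)$ — the two pieces of $W$ on its two sides share no vertex, since such a shared vertex would be a double vertex whose deletion still leaves $\Sigma(W)$ connected, contradicting hypothesis (ii) — hence repeated edges form vertex-disjoint simple paths (no three can meet at a vertex), and these will be the block paths, while multiplicity-one edges span the block cycles; this already yields the final "once/twice, crossing at cut-points" assertion as soon as the cycle-tree structure is established.

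Now the induction. If $W$ has no double vertex, then $W$ is simple and closed, $\Sigma(W)$ is a single cycle, balanced because the Eulerian walk $W$ is balanced; it is a Type~I circuit, trivially an Eulerian cycle-tree (Parity Condition vacuous), $\varepsilon_W$ is a direction (a directed closed walk has no sink or source), and every edge is used once. In the auxiliary case with no double vertex, $W$ is a simple cycle with a sink/source at $u$, hence unbalanced, and $\Sigma(W)$ is a single unbalanced cycle — an Eulerian cycle-tree once $u$ is counted. If $W$ has a double vertex, choose $u=u_\alpha=u_\beta$ with $\beta-\alpha$ minimal; by minimality and no triple vertices $W_1$ is a simple cycle $C$, necessarily unbalanced since by the pattern above it has a sink/source at $u$ and a balanced cycle traversed once has neither. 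Invoking hypothesis (ii) and no triple vertices once more, one shows $C$ and $W_2$ share no edge and meet only in double vertices of $W$ (each a cut-point of $\Sigma(W)$); then $W_2$, reread from $u$ to $u$, is a directed midway-back avoided walk of the second kind on a strictly smaller edge multiset whose double vertices are cut-points of $\Sigma(W_2)$, so the induction hypothesis applies. Gluing $C$ back along the cut-point(s) it shares with $\Sigma(W_2)$: conditions (i)--(iv) survive because each new intersection is a single cut-point; the Parity Condition survives because $C$ is unbalanced and meets the rest in an odd number of cut-points while no other cycle's count changes; and $\varepsilon_W$ is a direction because $(C,\varepsilon_W)$ has a sink or a source at each of its cut-points by the pattern, while $\Sigma(W)$ has a sink/source only at the marked vertex in the auxiliary case and nowhere at all in the closed case.

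The main obstacle is precisely the structural step "$C$ meets $\Sigma(W_2)$ only in cut-points, in no edge" — equivalently, that peeling an innermost cycle along its cut-points leaves a legitimate smaller walk of the allowed form — which requires a careful exclusion, via hypothesis (ii) and no triple vertices, of double vertices of $W$ that interleave $C$ with $W_2$; the auxiliary "sink/source endpoint" version is the device that lets the induction close, since, as noted, a plain directed closed walk cannot be cut into smaller directed closed walks. Verifying that the Parity Condition and the direction conditions are maintained by each gluing is then a routine check against the local orientation patterns recorded above.
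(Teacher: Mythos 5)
Your overall strategy---peel an innermost cycle $C$ at a closest double-vertex pair, pass to the remainder $W_2$, and run the induction over an auxiliary class of midway-back avoided walks carrying a sink or source at the basepoint---is a genuinely different route from the paper's. The paper's proof is direct and non-inductive: hypothesis (ii) forces the doubled edges to induce vertex-disjoint simple paths, deleting their interiors leaves a connected graph with all degrees $2$ or $4$, which decomposes into edge-disjoint cycles, and the Parity Condition then comes for free from Theorem~\ref{Parity-Consition-via-Direction} (the restriction of $\varepsilon_W$ is a direction, hence the parity holds). Your route re-proves the parity by hand inside the induction. Your supporting observations---exclusion of interleaved double-vertex pairs via (ii), unbalancedness of $C$ via Lemma~\ref{Directed-Walk-to-Sign} applied to a walk with a sink/source at its basepoint, the bridge property of repeated edges---are correct and match local computations the paper performs elsewhere (Lemma~\ref{Minimal-Directed-Eulerian-Walk-to-Midway-Cut-Point}, pattern \eqref{Double-Point-Pattern}).

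The genuine gap is in the statement of your auxiliary induction hypothesis: ``for a walk of the second kind the conclusion is that $\Sigma(W)$ is an Eulerian cycle-tree once $u$ is adjoined as one more intersection vertex.'' This is false as stated. When the peeled end-block cycle $C$ attaches to the rest of $T$ through a block path of positive length---equivalently, when the two residual edge-occurrences of $W_2$ at $u$ are the \emph{same} repeated edge---the graph $\Sigma(W_2)$ is a cycle-tree with a pendant doubled path hanging off it and terminating at $u$. That object is not a cycle-tree with or without $u$ declared an intersection vertex, since condition (iii) of the definition requires every block path to join two block cycles. A Type~III circuit $C_1PC_2$ already exhibits the failure: peeling $C_2$ at its cut-point leaves $C_1\cup P$, and your induction hypothesis does not apply to it. The repair is to enlarge the auxiliary class to ``Eulerian cycle-tree, or Eulerian cycle-tree with a pendant doubled path ending at the marked vertex, such that attaching an unbalanced cycle at the marked vertex yields an Eulerian cycle-tree,'' with the parity bookkeeping adjusted for the cycle at the far end of the pendant path; with that change your argument closes, but as written the inductive step fails in exactly the pendant-path case.
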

\begin{proof}
Lemma~\ref{Midway-to-Eulerian-No-Triple} implies that $W$ has no
triple vertices, that is, $W$ has only possible double vertices and
double edges. Since each double vertex in $W$ is a cut-point of
$\Sigma(W)$, then each connected component of the signed subgraph
induced by the double edges of $W$ is a simple path (of possible
zero length), called a {\em double-edge path} of $W$. Remove the
internal part of each double-edge path of positive length from $W$,
we obtain an Eulerian graph whose vertex degrees are either 2 or 4.
The Eulerian graph can be decomposed into edge-disjoint cycles,
called {\em block cycles}. Each double-edge path (of possible zero
length) connects exactly two block cycles. Since $\Sigma(W)$ is
connected and each double vertex in $W$ is a cut-point of
$\Sigma(W)$, it follows that $\Sigma(W)$ is a cycle-tree.

It is clear that $W$ uses each edge of block cycles once and each
edge of block paths twice, and crosses from one block to the other
block at each cut-point. Since $(W,\varepsilon_W)$ is midway-back
avoided, it follows that the restriction of $\varepsilon_W$ to
$\Sigma(W)$ is a direction on the cycle-tree. Thus $\Sigma(W)$ is an
Eulerian cycle-tree by Lemma~\ref{Parity-Consition-via-Direction}.
\end{proof}

\begin{cor}\label{FRA-Walk-to-Eulerian-Cycle-Tree}
Let $(W,\varepsilon_f)$ be a directed closed walk found by FRA. Then
$\Sigma(W)$ is an Eulerian cycle-tree with direction
$\varepsilon_f$, and $W$ uses each edge of block cycles once and
each edge of block paths twice, crossing from one block to the other
block at each cut-point.
\end{cor}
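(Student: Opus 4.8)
The plan is to observe that this corollary is exactly the conjunction of Lemma~\ref{FRA-Walk-to-Midway-Cut-Point} and Theorem~\ref{Midway-Cut-Point-to-Eulerian-Cycle-Tree}, so the work is entirely in checking that the output of FRA satisfies the two hypotheses of the theorem. First I would recall, from the discussion following the statement of FRA, that the algorithm does terminate and that what it produces is a directed closed walk $(W,\varepsilon_f)$; thus it makes sense to feed it into the theorem. The orientation carried by this walk is the restriction of $\varepsilon_f$ (the orientation attached to the flow $f$, as in \eqref{Orinetation-f}), and this is the $\varepsilon_W$ of Theorem~\ref{Midway-Cut-Point-to-Eulerian-Cycle-Tree}.

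Next I would verify hypothesis (i) of the theorem: $(W,\varepsilon_f)$ is a directed midway-back avoided walk. This is precisely Lemma~\ref{FRA-Walk-to-Midway-Cut-Point}(a), which in turn rests on the pattern identity \eqref{Double-Point-Pattern} forced at every double vertex of an FRA walk by {\sc Step~5}. Then I would verify hypothesis (ii): each double vertex of $W$ is a cut-point of $\Sigma(W)$. This is exactly Lemma~\ref{FRA-Walk-to-Midway-Cut-Point}(b) (whose proof reduces the form \eqref{beta-alpha-gamma-ell} to an increasing-index form via Lemma~\ref{W-W-Inverse} and then rules out a non-cut double vertex by exhibiting a premature stop of FRA in {\sc Step~4}). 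With both hypotheses in hand, Theorem~\ref{Midway-Cut-Point-to-Eulerian-Cycle-Tree} applies verbatim and yields that $\Sigma(W)$ is an Eulerian cycle-tree, that the restriction of $\varepsilon_f$ to $\Sigma(W)$ is a direction on that cycle-tree, and that $W$ uses each edge of a block cycle once and each edge of a block path twice, crossing between blocks at each cut-point — which is the full statement of the corollary.

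Since the corollary is a straightforward specialization, there is no real obstacle: the only thing to be careful about is bookkeeping, namely making explicit that $\varepsilon_W = \varepsilon_f|_{\Sigma(W)}$ so that the "direction $\varepsilon_f$" assertion of the corollary is literally the "restriction of $\varepsilon_W$ is a direction" assertion of the theorem, and that the termination of FRA is what guarantees $(W,\varepsilon_f)$ is indeed a closed walk to which the theorem can be applied. I would therefore keep the proof to two or three lines: cite Lemma~\ref{FRA-Walk-to-Midway-Cut-Point} for (i) and (ii), then invoke Theorem~\ref{Midway-Cut-Point-to-Eulerian-Cycle-Tree}.
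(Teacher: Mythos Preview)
Your proposal is correct and follows exactly the paper's own approach: the paper's proof is the single sentence ``It follows from Lemma~\ref{FRA-Walk-to-Midway-Cut-Point} and Theorem~\ref{Midway-Cut-Point-to-Eulerian-Cycle-Tree},'' which is precisely the two-step verification you outline. Your additional remarks about termination of FRA and the identification $\varepsilon_W=\varepsilon_f|_{\Sigma(W)}$ are helpful clarifications but not new ideas beyond what the paper does.
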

\begin{proof}
It follows from Lemma~\ref{FRA-Walk-to-Midway-Cut-Point} and
Theorem~\ref{Midway-Cut-Point-to-Eulerian-Cycle-Tree}.
\end{proof}

\begin{thm}[Flow Reduction Theorem]\label{Flow-Reduction-Theorem}
Let $f$ be a nontrivial integral flow of $(\Sigma,\varepsilon)$.
Then there exist minimal directed Eulerian walks
$(W_i,\varepsilon_{f})$ and Eulerian cycle-trees $T_i=\Sigma(W_i)$
such that
\begin{equation}
f=\sum f_{(W_i,\,\varepsilon_{f})} = \sum
[\varepsilon,\varepsilon_f]\cdot I_{T_i},
\end{equation}
where $f_{(W_i,\,\varepsilon_f)}$ are given by \eqref{FW} and
$I_{T_i}$ by \eqref{Indicator-Function}.

Furthermore, if $f$ is indecomposable, then there exists a minimal
directed Eulerian walk $(W,\varepsilon_f)$ and an Eulerian
cycle-tree $T=\Sigma(W)$ such that
\begin{equation}
f=f_{(W,\,\varepsilon_f)}=[\varepsilon,\varepsilon_f]\cdot I_T.
\end{equation}
\end{thm}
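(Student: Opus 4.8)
The plan is to iterate the FRA. Start with a nontrivial integral flow $f$; by the reduction to $|f|$ on $(\Sigma,\varepsilon_f)$ we may assume $f\ge 0$. Run the FRA once. By the discussion following the algorithm, it terminates with a directed closed walk $(W_1,\varepsilon_f)$ whose characteristic vector $f_{(W_1,\varepsilon_f)}$ is an integral flow (Lemma~\ref{Walk-to-Flow}), and by Corollary~\ref{FRA-Walk-to-Eulerian-Cycle-Tree} the subgraph $T_1=\Sigma(W_1)$ is an Eulerian cycle-tree with direction $\varepsilon_f$, and $W_1$ uses block-cycle edges once and block-path edges twice, crossing blocks at cut-points. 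Consequently $f_{(W_1,\varepsilon_f)}=[\varepsilon,\varepsilon_f]\cdot I_{T_1}$ by \eqref{Indicator-Function}. Set $f':=f-f_{(W_1,\varepsilon_f)}$; this is again an integral flow. The key point, which I would extract from the FRA description, is that at each step the chosen edge $x_{\ell+1}$ lies in $\supp f'$ and $\varepsilon_f$ never reverses sign along $W_1$ relative to $\varepsilon_f$, so in fact $0\le f_{(W_1,\varepsilon_f)}(e)\le f(e)$ for every edge $e$ (after the $|f|$ normalization), whence $f'\ge 0$ and $\supp f'\subsetneq\supp f$ (at least the initial edge $x_1$ drops out, and more generally the multiset $(E,f')$ is strictly smaller than $(E,f)$).

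Next I would set up the induction on the total weight $\sum_{e}|f(e)|$ (or on $|\supp f|$ together with the weight). If $f'$ is trivial we are done with a single walk; otherwise $f'$ is a nontrivial integral flow with strictly smaller weight, so by induction $f'=\sum_{i\ge 2} f_{(W_i,\varepsilon_{f'})}=\sum_{i\ge 2}[\varepsilon,\varepsilon_{f'}]\cdot I_{T_i}$ for minimal directed Eulerian walks and Eulerian cycle-trees $T_i=\Sigma(W_i)$. Since $f'$ has the same sign pattern as $f$ on its support, $\varepsilon_{f'}$ agrees with $\varepsilon_f$ there, so the couplings match and we obtain $f=\sum_{i\ge1} f_{(W_i,\varepsilon_f)}=\sum_{i\ge1}[\varepsilon,\varepsilon_f]\cdot I_{T_i}$. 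It remains to arrange that each $W_i$ is \emph{minimal} (not just a directed Eulerian walk): if some $W_i$ properly contains an Eulerian walk $W_i'$ as edge multisets, replace $W_i$ by $W_i'$ and push the leftover edge multiset $W_i\setminus W_i'$ back into the pool of flows to be decomposed; since this strictly decreases the weight being processed, the process terminates and every walk ends up minimal. This yields part one.

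For the "furthermore" clause, suppose $f$ is indecomposable. Run the FRA to get $(W_1,\varepsilon_f)$ with $f_{(W_1,\varepsilon_f)}=[\varepsilon,\varepsilon_f]\cdot I_{T_1}$ and $f'=f-f_{(W_1,\varepsilon_f)}$, where as above $f'(e)f(e)\ge0$ for all $e$ and $f_{(W_1,\varepsilon_f)}$ is itself a nontrivial integral flow with the same sign as $f$ at every edge. If $f'$ were nontrivial, then $f=f_{(W_1,\varepsilon_f)}+f'$ would be a decomposition of $f$ into two nontrivial integral flows of matching sign at each edge, contradicting indecomposability. Hence $f'\equiv 0$, i.e. $f=f_{(W_1,\varepsilon_f)}=[\varepsilon,\varepsilon_f]\cdot I_{T}$ with $T=T_1=\Sigma(W_1)$. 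Finally $W$ is minimal: if it properly contained an Eulerian walk $W'$, then $f_{(W',\varepsilon_f)}$ and $f_{(W,\varepsilon_f)}-f_{(W',\varepsilon_f)}$ would again be nontrivial same-sign integral flows summing to $f$, again a contradiction. So $(W,\varepsilon_f)$ is a minimal directed Eulerian walk, as required.

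The main obstacle is the sign/monotonicity bookkeeping in the first paragraph: one must check carefully, from Steps~1--5 of the FRA, that $f_{(W_1,\varepsilon_f)}$ and the residual $f-f_{(W_1,\varepsilon_f)}$ really do have the same sign as $f$ at every edge (equivalently $0\le [\varepsilon,\varepsilon_f]\cdot f_{(W_1,\varepsilon_f)}\le |f|$ edgewise). This is what makes the single FRA step a genuine step of an integral decomposition, and it is the hinge on which both the induction in part one and the contradiction argument in the "furthermore" clause turn; everything else is routine induction on weight plus invoking Corollary~\ref{FRA-Walk-to-Eulerian-Cycle-Tree}.
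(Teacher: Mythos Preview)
Your overall scheme---reduce to $|f|$ on $(\Sigma,\varepsilon_f)$, run FRA, peel off the resulting walk, and iterate on the residual---is exactly the paper's proof. The monotonicity $0\le f_{W_1}\le |f|$ that you flag as the hinge is indeed the content of the FRA construction (each new edge is drawn from $\supp(f-f_{(W,\varepsilon_f)})$), and the paper simply asserts it as you do.

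Where you diverge is in establishing \emph{minimality} of the $W_i$. The paper does not argue minimality ad hoc: it invokes Theorem~\ref{Eulerian-Cycle-Tree-to-Minimality}, whose final sentence says precisely that every closed walk produced by FRA is already a minimal Eulerian walk (because $\Sigma(W_i)$ is an Eulerian cycle-tree and $W_i$ traverses it in the canonical once/twice fashion). Your ``replace $W_i$ by $W_i'$ and push the leftover back'' maneuver is not obviously sound: an Eulerian walk $W_i'$ properly contained in $W_i$ as an edge multiset need not carry the direction $\varepsilon_f$, so the symbol $f_{(W_i',\varepsilon_f)}$ may be ill-defined, and with an arbitrary direction $\varepsilon_{W_i'}$ you have no guarantee that $f_{(W_i',\varepsilon_{W_i'})}$ and the leftover have the same sign as $f$. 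The same defect resurfaces in your minimality-by-contradiction step in the ``furthermore'' clause. The clean fix is simply to cite Theorem~\ref{Eulerian-Cycle-Tree-to-Minimality} for minimality (and Theorem~\ref{Min-Directed-Eulerian-Walk-to-Eulerian-Cycle-Tree} for the identity $f_{(W_i,\varepsilon_f)}=[\varepsilon,\varepsilon_f]\cdot I_{T_i}$, which you currently derive only informally from the indicator definition).
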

\begin{proof}
Consider the nonnegative integral flow $|f|$ of
$(\Sigma,\varepsilon_f)$, where $\varepsilon_f$ is defined by
\eqref{Orinetation-f}. Let $\Sigma(f)$ denote the signed subgraph
induced by the edge subset $\supp f$. Then FRA finds a directed
Eulerian walk $(W_1,\varepsilon_f)$ on the oriented signed graph
$(\Sigma(f),\varepsilon_f)$, such that $f_{W_1}\leq |f|$, where
$f_{W_1}$ is given by \eqref{FW} with $\varepsilon=\varepsilon_f$.
Corollary~\ref{FRA-Walk-to-Eulerian-Cycle-Tree} implies that
$\Sigma(W_1)$ is an Eulerian cycle-tree $T_1$, and
Theorem~\ref{Eulerian-Cycle-Tree-to-Minimality} implies that $W_1$
is a minimal Eulerian walk. Then
Theorem~\ref{Min-Directed-Eulerian-Walk-to-Eulerian-Cycle-Tree}
implies $f_{W_1}=I_{T_1}$, the indicator function of $T_1$ defined
by \eqref{Indicator-Function}.

If $f_1:=|f|-f_{W_1}\neq 0$, then FRA finds a minimal directed
Eulerian walk $(W_2,\varepsilon_f)$ on
$(\Sigma(f_{1}),\varepsilon_f)$, such that $f_{W_2}\leq |f|-f_{W_1}$
and $f_{W_2}=I_{T_2}$, where $T_2$ is the Eulerian cycle-tree
$\Sigma(W_2)$. Likewise, if $f_2:=|f|-f_{W_1}-f_{W_2}\neq 0$, then
FRA finds a minimal directed Eulerian walk $(W_3,\varepsilon_f)$ on
$(\Sigma(f_{2}),\varepsilon_f)$, such that $f_{W_3}\leq
|f|-f_{W_1}-f_{W_2}$ and $f_{W_3}=I_{T_3}$, where $T_3$ is the
Eulerian cycle-tree $\Sigma(W_3)$. Continue this procedure, we
obtain minimal directed Eulerian walks
\[
(W_1,\varepsilon_f),\; (W_2,\varepsilon_f),\; \ldots,\;
(W_k,\varepsilon_f)
\]
on $(\Sigma(f),\varepsilon_f)$, such that $|f|=\sum_{i=1}^k
f_{W_i}=\sum_{i=1}^k I_{T_i}$, where $T_i$ are the Eulerian
cycle-trees $\Sigma(W_i)$ and $f_{W_i}=I_{T_i}$. Note that
\[
f=[\varepsilon,\varepsilon_f]\cdot |f|, \sp
f_{(W_i,\,\varepsilon_f)} =[\varepsilon,\varepsilon_f]\cdot f_{W_i}.
\]
We obtain $f =\sum_{i=1}^k f_{(W_i,\,\varepsilon_f)} =\sum_{i=1}^k
[\varepsilon,\varepsilon_f]\cdot I_{T_i}$.

If $f$ is indecomposable, by definition we must have $k=1$.
\end{proof}

\section{Characterizations of Eulerian Cycle-trees}

This section is to establish properties satisfied by Eulerian
cycle-trees such as the Existence and Uniqueness of Direction, the
Minimality, and the Half-integer Scale Decomposition. We shall see
the equivalence of indecomposable flows, minimal Eulerian walks, and
Eulerian cycle-trees. The byproduct is the equivalence of circuits,
elementary flows, and elementary Eulerian walks, and the
classification of circuits.

\begin{lemma}\label{Directed-Walk-to-Sign}
Let $W=u_0x_1u_1x_2\ldots u_{n-1}x_nu_n$ be a walk with a direction
$\varepsilon_W$. Then
\[
\varepsilon_W(u_n,x_n)=-\sigma(W)\varepsilon_W(u_0,x_1).
\]
In particular, if $(W,\varepsilon_W)$ is a directed closed walk,
then $\sigma(W)=1$.
\end{lemma}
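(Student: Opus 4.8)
The plan is to prove the statement by induction on the length $n$ of the walk $W$, tracking how the two defining conditions of a direction propagate the product $\varepsilon_W(u_i,x_i)$ along the walk. The base case $n=1$ is immediate from the first defining equation of a direction, since $\varepsilon_W(u_0,x_1)\varepsilon_W(u_1,x_1)=-\sigma(x_1)=-\sigma(W)$, whence $\varepsilon_W(u_1,x_1)=-\sigma(x_1)\varepsilon_W(u_0,x_1)=-\sigma(W)\varepsilon_W(u_0,x_1)$ (using that $\varepsilon_W$ takes values $\pm1$, so $\varepsilon_W(u_0,x_1)^2=1$). For the inductive step, suppose the formula holds for the initial subwalk $W'=u_0x_1\cdots u_{n-1}$, so $\varepsilon_W(u_{n-1},x_{n-1})=-\sigma(W')\varepsilon_W(u_0,x_1)$ where $\sigma(W')=\prod_{i=1}^{n-1}\sigma(x_i)$.

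The key step is then to combine the two direction axioms at the vertex $u_{n-1}$ and across the edge $x_n$. The second axiom $\varepsilon_W(u_{n-1},x_{n-1})+\varepsilon_W(u_{n-1},x_n)=0$ gives $\varepsilon_W(u_{n-1},x_n)=-\varepsilon_W(u_{n-1},x_{n-1})$, and the first axiom applied to $x_n$ gives $\varepsilon_W(u_n,x_n)=-\sigma(x_n)\varepsilon_W(u_{n-1},x_n)$. Chaining these together with the inductive hypothesis yields
\[
\varepsilon_W(u_n,x_n)=-\sigma(x_n)\bigl(-\varepsilon_W(u_{n-1},x_{n-1})\bigr)=\sigma(x_n)\bigl(-\sigma(W')\varepsilon_W(u_0,x_1)\bigr)=-\sigma(W)\varepsilon_W(u_0,x_1),
\]
since $\sigma(x_n)\sigma(W')=\sigma(W)$. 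This completes the induction.

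For the final assertion, suppose $(W,\varepsilon_W)$ is a directed closed walk, so $u_n=u_0$ and the extra closure condition $\varepsilon_W(u_0,x_1)+\varepsilon_W(u_n,x_n)=0$ holds, i.e.\ $\varepsilon_W(u_n,x_n)=-\varepsilon_W(u_0,x_1)$. Comparing with the formula just proved, $-\varepsilon_W(u_0,x_1)=-\sigma(W)\varepsilon_W(u_0,x_1)$, and dividing by $-\varepsilon_W(u_0,x_1)$ (which is nonzero, being $\pm1$) gives $\sigma(W)=1$.

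I do not anticipate a genuine obstacle here; the only point requiring mild care is bookkeeping the signs correctly — in particular remembering that $\varepsilon_W$ is $\pm1$-valued so squares are $1$, and keeping straight which vertex each pair $(u_i,x_i)$ versus $(u_i,x_{i+1})$ refers to when applying the "no turn-back" condition $\varepsilon_W(u_i,x_i)+\varepsilon_W(u_i,x_{i+1})=0$. One could alternatively phrase the whole argument as a telescoping product of the identities $\varepsilon_W(u_{i-1},x_i)\varepsilon_W(u_i,x_i)=-\sigma(x_i)$ together with $\varepsilon_W(u_i,x_i)\varepsilon_W(u_i,x_{i+1})=-1$ for interior vertices, but the induction is cleaner to write.
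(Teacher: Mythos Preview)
Your proof is correct and follows essentially the same approach as the paper: both arguments use the two defining relations of a direction to derive the recursion $\varepsilon_W(u_i,x_i)=-\sigma(x_i)\varepsilon_W(u_{i-1},x_i)$ and $\varepsilon_W(u_i,x_{i+1})=-\varepsilon_W(u_i,x_i)$, then iterate (the paper tersely, you by explicit induction) to obtain $\varepsilon_W(u_n,x_n)=-\sigma(W)\varepsilon_W(u_0,x_1)$, and both finish the closed-walk case by invoking the closure condition $\varepsilon_W(u_n,x_n)=-\varepsilon_W(u_0,x_1)$.
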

\begin{proof}
The direction $\varepsilon_W$ must be constructed inductively as
follows:
\[
\varepsilon_W(u_i,x_i)=-\sigma(x_i)\varepsilon_W(u_{i-1},x_i),
\]
\[
\varepsilon_W(u_i,x_{i+1})=-\varepsilon_W(u_i,x_i), \sp 1\leq i\leq
n.
\]
Then $\varepsilon_W(u_n,x_n)$ is determined by
$\varepsilon_W(u_0,x_1)$ as $\varepsilon_W(u_n,x_n) =-
\sigma(W)\varepsilon_W(u_0,x_1)$.

If $(W,\varepsilon_W)$ is a directed closed walk, then
$\varepsilon_W(u_n,x_n)=-\varepsilon_W(u_0,x_1)$. It is clear that
$\sigma(W)=1$.
\end{proof}

Let $T=\{C_i,P_j\}$ be a cycle-tree throughout. We choose a block
cycle $C_0$ and write
\begin{align}\label{Closed-Walk-C_0}
C_0 &=u_0x_1u_1x_2\ldots u_{l-1}x_lu_l, \sp u_l=u_0.
\end{align}
If $T$ has two or more block cycles, we require $C_0$ to an
end-block cycle, having $u_0$ as its unqiue intersection vertex. Let
$P$ be the block path (of possible zero length) from the vertex
$u_0$ on $C_0$ to a vertex $w_0$ on another block cycle $C_1$. We
write
\begin{align}
P &=v_0y_1v_1y_2\ldots v_{m-1}y_mv_m, \sp v_m=w_0.
\end{align}
Remove the cycle $C_0$ and the internal part of the path $P$, we
obtain a cycle-tree
\begin{equation}\label{T1}
T_1=T\backslash (C_0\cup P),
\end{equation}
which has one fewer block cycle than $T$. Choose an edge $z_1$ on
$C_1$ incident with $w_0$ and switch the sign of $z_1$, we obtain a
cycle-tree $T_1'$. If $T$ is Eulerian, so is $T'_1$, for the block
cycle $C_1$ has one fewer intersection vertex in $T'_1$ than in $T$.
This procedure will be recalled in the proof of the following
Lemma~\ref{Cycle-Tree-to-Minimum-Closed-Walk} and
Theorem~\ref{Parity-Consition-via-Direction}.

\begin{lemma}\label{Cycle-Tree-to-Minimum-Closed-Walk}
Let $T$ be a cycle-tree. Then there exists a closed walk $W$ on $T$
that uses each edge of block cycles once and each edge of block
paths twice, and crosses from one block to the other block at each
cut-point.

Moreover, each such $W$ is a closed walk of minimum length that uses
all edges of $T$, and vice versa.
\end{lemma}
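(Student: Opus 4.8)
The plan is to prove existence of such a closed walk $W$ by induction on the number of block cycles of $T$, reusing the reduction $T\mapsto T_1'$ set up just before the statement, and then to verify the minimality claim by a direct counting argument.

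For existence, I would first treat the base case in which $T$ has a single block cycle $C_0$; then $T=C_0$, and $C_0$ itself, traversed once around, is a closed walk using each edge exactly once, with no block paths and no cut-points, so the conditions hold vacuously. For the inductive step, let $T=\{C_i,P_j\}$ have at least two block cycles. Choose an end-block cycle $C_0$ with unique intersection vertex $u_0$, the block path $P$ from $u_0$ to $w_0\in C_1$, and form $T_1=T\setminus(C_0\cup P)$ as in \eqref{T1}; $T_1$ is again a cycle-tree with one fewer block cycle. By the induction hypothesis there is a closed walk $W_1$ on $T_1$ traversing each block-cycle edge of $T_1$ once and each block-path edge of $T_1$ twice, crossing blocks at each cut-point. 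Since $w_0$ lies on the block cycle $C_1$ of $T_1$, I may cyclically rewrite $W_1$ so that it starts and ends at $w_0$. Now splice in the missing pieces: starting at $u_0$, go once around $C_0$ back to $u_0$, traverse $P$ from $u_0$ to $w_0$, perform $W_1$ (which returns to $w_0$), then traverse $P$ backward from $w_0$ to $u_0$. Call the result $W$. It is closed, it uses each edge of $C_0$ once, each edge of $P$ twice, and (by the induction hypothesis applied to $W_1$) each block-cycle edge of $T$ other than those of $C_0$ once and each block-path edge of $T$ other than those of $P$ twice. One must check the crossing condition at the newly relevant cut-points $u_0$ and $w_0$: at $w_0$ the walk enters from $P$ into $C_1$ and later leaves $C_1$ into $P$, so it crosses from the block $P$ to the block $C_1$ there; at $u_0$ it crosses between $C_0$ and $P$. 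This completes the induction.

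For the minimality half, let $W$ be any closed walk that uses each block-cycle edge once and each block-path edge twice, crossing blocks at cut-points. Its length is $\sum_i |C_i| + 2\sum_j |P_j|$, which is exactly $\sum_{e\in E(T)} I_T(e)$. Conversely, let $W'$ be any closed walk using all edges of $T$; I must show $\operatorname{length}(W')\ge \sum_i|C_i|+2\sum_j|P_j|$. Each block-path edge, being a cut-edge of the underlying graph of $T$ (its removal disconnects $T$, since the endpoints of $P_j$ are separating vertices), must be traversed an even number of times by any closed walk, hence at least twice; each block-cycle edge is traversed at least once. Summing these lower bounds gives exactly $\sum_i|C_i|+2\sum_j|P_j|$, so $W'$ has at least this length, with equality only when every block-cycle edge is used exactly once and every block-path edge exactly twice — and, as I will note, a closed walk achieving these exact multiplicities must cross blocks at each cut-point, since at a cut-point separating the incident blocks the walk can only move between a block and itself by revisiting, which the exact count forbids except through the allowed transitions. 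Hence the minimum-length closed walks using all edges of $T$ are precisely those described, proving both directions.

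The main obstacle I anticipate is the bookkeeping at the two splice points $u_0$ and $w_0$ in the inductive step: one must be careful that after rewriting $W_1$ to start at $w_0$ the reinserted detours around $C_0$ and along $P$ do not spoil the crossing condition at $w_0$ (where $W_1$ itself may already visit $w_0$ if $w_0$ was a cut-point of $T_1$) and that the multiplicities at shared vertices tally correctly; once those local patterns are checked against the four cases of Figure~\ref{Double-vertex-pattern}, the rest is routine.
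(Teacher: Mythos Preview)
Your proposal is correct and follows essentially the same approach as the paper: induction on the number of block cycles with the splice $W=C_0PW_1P^{-1}$, exactly as the paper does (see Figure~\ref{Cycle-Tree}). Your treatment of the minimality claim is actually more thorough than the paper's, which dismisses it as ``trivial''; your cut-edge argument for the lower bound and the observation that a single closed walk meeting that bound is forced to cross blocks at every cut-point (else it would decompose) are the right justifications, and the obstacle you flag at $w_0$ is harmless since the inserted transitions there are always between $P$ and some block of $T_1$, hence between distinct blocks of $T$.
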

\begin{proof}
If $T$ has only one block cycle, then $T$ is a cycle $C_0$ and can
be written as a closed walk in (\ref{Closed-Walk-C_0}). If $T$ has
two or more block cycles, then by induction there is a closed walk
$W_1$ on $T_1$ in (\ref{T1}) such that $W_1$ crosses from one block
to the other block at each intersection vertex. Then
$W=C_0PW_1P^{-1}$ is the required closed walk on $T$; see
Figure~\ref{Cycle-Tree}. The property of minimum length is trivial.
\begin{figure}[h]
\centering\includegraphics[width=70mm]{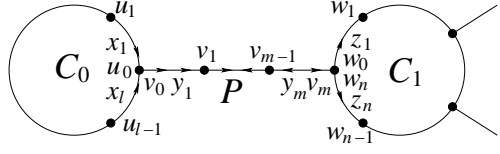}\\
\caption{End-block cycle of a cycle-tree $T$.} \label{Cycle-Tree}
\end{figure}
\end{proof}

\begin{thm}[Existence and Uniqueness of Direction on Eulerian Cycle-Tree]
\label{Parity-Consition-via-Direction}
Let $T$ be a cycle-tree. Then $T$ satisfies the Parity Condition if
and only if there exists a unique direction $\varepsilon_T$ on $T$
up to opposite sign.
\end{thm}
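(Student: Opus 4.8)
The plan is to proceed by induction on the number of block cycles of $T$, using the reduction $T \mapsto T_1'$ described just before the statement (remove the end-block cycle $C_0$ and the internal part of the block path $P$, then switch the sign of one edge $z_1$ of $C_1$ incident with $w_0$). First I would dispose of the base case: if $T$ has a single block cycle $C_0$, then $T$ is a lone cycle, a cycle has no cut-points, so a direction is simply an orientation with neither sink nor source, i.e. a ``consistent'' cyclic orientation of $C_0$; such an orientation exists precisely when $C_0$ is balanced (Lemma~\ref{Directed-Walk-to-Sign} forces $\sigma(C_0)=1$ for a directed closed walk tracing $C_0$), which is exactly the Parity Condition in the zero-intersection-vertex case ($0$ is even), and the two cyclic orientations are opposite to each other, giving uniqueness. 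For the inductive step, write $W = C_0 P W_1 P^{-1}$ as in Lemma~\ref{Cycle-Tree-to-Minimum-Closed-Walk}, where $W_1$ is the minimum-length closed walk on $T_1$.

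For the ``only if'' direction (Parity $\Rightarrow$ existence and uniqueness): assuming $T$ satisfies the Parity Condition, so does $T_1'$ (as noted in the text, $C_1$ loses one intersection vertex but $z_1$ changes sign, flipping the balance of $C_1$ and of every cycle through $z_1$, restoring parity; I would check this sign-bookkeeping carefully). By induction $T_1'$ has a unique direction $\varepsilon_{T_1'}$ up to sign; transport it back to $T_1$ by undoing the switch of $z_1$, obtaining a direction $\varepsilon_{T_1}$ on $T_1$. Then I would build $\varepsilon_T$ on $T$ by: orienting $C_0$ consistently (possible since $C_0$ is balanced by its parity, $1$ intersection vertex being odd would fail — wait, here is the subtlety: $C_0$ is an end-block cycle with exactly one intersection vertex $u_0$, so by Parity $C_0$ is \emph{unbalanced}; a direction on $C_0$ inside $T$ must have a sink or source at the single cut-point $u_0$, and since $\sigma(C_0)=-1$ the unique consistent-except-at-$u_0$ orientation does have a sink-or-source at $u_0$, by Lemma~\ref{Directed-Walk-to-Sign} applied to the open path $C_0 \setminus$(a vertex)); then directing $P$ so that it matches the chosen behaviour at $u_0$ and at $w_0$; then gluing $\varepsilon_{T_1}$ on the rest. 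The compatibility at $w_0$ on $C_1$ is where the sign-switch of $z_1$ pays off: having a sink-or-source at $w_0$ in $T$ corresponds exactly to \emph{not} having one at $w_0$ in $T_1'$ after switching $z_1$. I would verify no sink/source is created anywhere, and that the two sign choices for $C_0$ (together with the forced rest) yield exactly the two opposite directions of $T$, giving uniqueness.

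For the ``if'' direction (existence $\Rightarrow$ Parity): given a direction $\varepsilon_T$, trace the minimum closed walk $W=C_0 P W_1 P^{-1}$ with the induced direction; I would argue that the portion of $W$ around each block cycle $C$, read off between consecutive visits to cut-points on $C$, is a directed \emph{open} walk along an arc of $C$ between two cut-points (or a directed closed walk around all of $C$ if $C$ has no cut-point), and that a sink-or-source at each cut-point forces the arcs to alternate in a way that, by Lemma~\ref{Directed-Walk-to-Sign}, makes $\sigma(C) = (-1)^{(\#\text{cut-points on }C)}$; since $\sigma(C)=\pm1$ records balance, this is precisely the Parity Condition. The main obstacle I anticipate is the second step of the ``only if'' direction — namely the careful sign- and orientation-bookkeeping at $w_0$ when undoing the switch of $z_1$ and gluing, to confirm that ``sink-or-source at $w_0$ in $(C_1,\varepsilon_T)$'' is equivalent to the inductive hypothesis's output on $(C_1, \varepsilon_{T_1'})$ and that no new sink or source appears along $P$ or at $u_0$; everything else is either the induction skeleton or a direct application of Lemma~\ref{Directed-Walk-to-Sign}.
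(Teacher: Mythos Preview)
Your proposal is correct and, for the ``$\Rightarrow$'' direction, essentially identical to the paper's argument: both proceed by induction on the number of block cycles, use the same reduction $T\mapsto T_1'$ (remove $C_0$, $P$, switch the sign of $z_1$), pull back the inductively-obtained direction from $T_1'$ to $T_1$, and then glue on a direction of $P$ and of the unbalanced end-block cycle $C_0$, checking compatibility at $u_0$ and $w_0$. Your identification of the bookkeeping at $w_0$ as the delicate point is exactly where the paper spends its effort.

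Where you diverge is in the ``$\Leftarrow$'' direction. The paper runs the same induction in reverse: it restricts the given direction to $T_1$, switches $z_1$ to get a direction on $T_1'$, applies the inductive hypothesis to conclude $T_1'$ satisfies Parity, and then unwinds to see that each block cycle of $T$ (including $C_0$ and $C_1$) satisfies it. Your approach is more direct and avoids induction entirely for this half: fix any block cycle $C$, cut it into arcs at its cut-points, note that on each arc the orientation is a genuine walk-direction (since non-cut-point vertices of $C$ carry only the two edges of $C$, and ``no sink or source in $T$'' forces them to be opposite), apply Lemma~\ref{Directed-Walk-to-Sign} to each arc, and chain the resulting identities using the sink-or-source condition at cut-points to get $\sigma(C)=(-1)^{k}$ with $k$ the number of cut-points on $C$. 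This is a clean, self-contained argument that makes the Parity Condition visibly equivalent to the local sign constraints; the paper's inductive version has the advantage of reusing the exact same machinery as the existence half, but is less transparent about \emph{why} parity emerges. Either route is fine; your mention of the walk $W$ in this half is actually unnecessary --- the argument lives entirely on $(C,\varepsilon_T)$.
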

\begin{proof}
``$\Rightarrow$'': We proceed by induction on the number of block
cycles of $T$. When $T$ has only one block cycle, then $T$ is the
cycle itself, and the cycle has to be balanced. It is clear that a
balanced cycle has a unique direction up to opposite sign.

Assume that $T$ has two or more block cycles. Then $T_1$ in
\eqref{T1} is a cycle-tree having one fewer block cycle than $T$.
Switch the sign of the edge $z_1$ in $T_1$, we obtain an Eulerian
cycle-tree $T'_1$. By induction there exists a unique direction
$\varepsilon_{T'_1}$ (up to opposite sign) on $T'_1$. Let us switch
the sign of $z_1$ in $T'_1$ back to the sign of $z_1$ in $T_1$ and
define an orientation $\varepsilon_{T_1}$ on $T_1$ by setting
$\varepsilon_{T_1}=\varepsilon_{T'_1}$ for all vertex-edge pairs
except
\[
\varepsilon_{T_1}(w_0,z_1) =-\varepsilon_{T'_1}(w_0,z_1).
\]
Then $(C_1,\varepsilon_{T_1})$ has either a sink or a source at
$w_0$. Let $\varepsilon_P$ be a direction on $P$ such that
$\varepsilon_P(v_m,y_m)=-\varepsilon_{T_1}(w_0,z_1)$, and
$\varepsilon_{C_0}$ be a direction on $C_0$ such that
$\varepsilon_{C_0}(u_l,x_l)=-\varepsilon_P(v_0,y_1)$. Then the joint
orientation $\varepsilon_{C_0}\vee \varepsilon_{P}\vee
\varepsilon_{T_1}$ gives rise to a direction $\varepsilon_T$ on $T$;
see Figure~\ref{Cycle-Tree}.

Let $\varepsilon'_T$ be an arbitrary direction on $T$. Then
$\varepsilon'_T$ induces directions $\varepsilon'_{C_0},
\varepsilon'_{P}, \varepsilon'_{T'_1}$ on $C_0, P, T'_1$
respectively, where
\begin{align*}
\varepsilon'_{T'_1}(w_0,z_1) &= -\varepsilon'_T(w_0,z_1),\\
\varepsilon'_{P}(v_m,y_m)    &= -\varepsilon_T(w_0,z_n),\\
\varepsilon'_{C_0}(u_l,x_l)  &= -\varepsilon'_{P}(v_0,y_1).
\end{align*}
Then by induction we have that $\varepsilon'_{T'_1}=\pm
\varepsilon_{T'_1}$. Thus $\varepsilon'_{P}=\pm\varepsilon_P$ and
$\varepsilon'_{C_0}=\pm\varepsilon_{C_0}$. Therefore
$\varepsilon'_T=\pm\varepsilon_T$; see Figure~\ref{Cycle-Tree}. This
shows that the direction $\varepsilon_T$ is unique up to opposite
sign.

``$\Leftarrow$": We also proceed by induction on the number of block
cycles. Let $\varepsilon_T$ be a direction on $T$, and let $W$ be a
closed walk on $T$ that uses each edge of block cycles once and each
edge of block paths twice. Then $(W,\varepsilon_T)$ is a directed
Eulerian walk. It is trivially true when $T$ has only one block
cycle, for the cycle has zero number of intersection vertices and is
balanced by Lemma~\ref{Directed-Walk-to-Sign}.

Assume that $T$ has two or more block cycles. Let
$(W_1,\varepsilon_{W_1})$ be the restriction of $(W,\varepsilon_W)$
to $T_1$ in \eqref{T1}. Then $(W_1,\varepsilon_{W_1})$ is a directed
walk, having either a sink or a source at $w_0$. Switch the sign of
the edge $z_1$ in $T$ and its orientation at $w_0$. We obtain a
directed Eulerian walk $(W'_1,\varepsilon'_{W_1})$ on $T'_1$ that
uses each edge of block cycles once and each edge of block paths
twice. By induction all block cycles of $T'_1$ satisfy the Parity
Condition. Thus all block cycles of $T_1$ other than $C_1$ satisfies
the Parity Condition. Let us switch the sign of $z_1$ back to the
sign of $z_1$ in $T$. Since $C_1$ has one fewer intersection vertex
in $T$ than that in $T'_1$, we see that $C_1$ satisfies the Parity
Condition in $T$. Since $(C_0,\varepsilon_T)$ has either a sink or a
source at $u_0$, it forces that $C_0$ is unbalanced. Hence $C_0$
also satisfies the Parity Condition.
\end{proof}

\begin{lemma}\label{Minimal-Directed-Eulerian-Walk-to-Midway-Cut-Point}
Let $W$ be a minimal Eulerian walk with a direction $\varepsilon_W$.
Then
\begin{enumerate}
\item[(a)] $(W,\varepsilon_W)$ is midway-back avoided.

\item[(b)] Each double vertex in $W$ is a cut-point of $\Sigma(W)$.
\end{enumerate}
\end{lemma}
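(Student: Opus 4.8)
The plan is to prove both parts by contradiction from the minimality of $W$, deriving (b) from (a). Write $W=u_0x_1u_1\cdots x_nu_n$ with $u_0=u_n$. The engine of the argument is the following observation: deleting a cyclic arc of $W$ leaves a complementary arc which, equipped with the direction inherited from $\varepsilon_W$ (and possibly traversed backwards, which respects the defining relations of a direction), is still a directed walk; the moment its two terminal vertex-edge pairs receive opposite $\varepsilon_W$-values it becomes a directed closed walk, whence by Lemma~\ref{Directed-Walk-to-Sign} it is balanced, and its repeated edges keep the consistent orientations they had in $W$, so it is an Eulerian walk properly contained in $W$ --- contradicting the minimality of $W$.

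For (a): if $(W,\varepsilon_W)$ is not midway-back avoided, then $u_\alpha=u_\beta$ for some $0\le\alpha<\beta<n$ with $\varepsilon_W(u_\beta,x_\beta)=-\varepsilon_W(u_\alpha,x_{\alpha+1})$. This is precisely the closing relation for the arc $W_1=u_\alpha x_{\alpha+1}u_{\alpha+1}\cdots x_\beta u_\beta$, so $(W_1,\varepsilon_W)$ is a directed closed walk; since $1\le\beta-\alpha\le n-1$, $W_1$ is an Eulerian walk properly contained in $W$, a contradiction. Hence (a) holds, and Lemma~\ref{Midway-to-Eulerian-No-Triple} then gives that $W$ has no triple vertices, so the phrase ``double vertex'' is unambiguous.

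For (b): suppose a double vertex $u=u_\delta=u_\eta$ ($\delta<\eta$) is not a cut-point of $\Sigma(W)$. The two arcs of $W$ delimited by the two occurrences of $u$ induce connected signed subgraphs with union $\Sigma(W)$; since removing the point $u$ leaves $\Sigma(W)$ connected, these two subgraphs meet in some point other than $u$. In the non-cut-point case neither arc can be a single loop at $u$ nor contain any loop at $u$ (each such configuration would force $u$ to be a cut-point or create a triple vertex), so that shared point delivers a vertex $v\neq u$ appearing once on each arc. After a cyclic rotation and possibly swapping the roles of $u$ and $v$, we may assume $0\le\delta<\mu<\eta<\nu\le n-1$ with $u=u_\delta=u_\eta$, $v=u_\mu=u_\nu$, so that, with indices read mod $n$, $W=ABCD$ where $A$ runs from $u_\delta$ to $u_\mu$, $B$ from $u_\mu$ to $u_\eta$, $C$ from $u_\eta$ to $u_\nu$, $D$ from $u_\nu$ to $u_\delta$, each of positive length. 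Feeding the midway-back identities at $u$ and at $v$ from part (a) into the turn conditions of $\varepsilon_W$ at $u_\delta,u_\eta,u_\mu,u_\nu$, one evaluates the end-of-arc direction values and checks that the recombination $W'=AC^{-1}$ (first $A$, then $C$ reversed) closes up correctly at $v$ and at $u$; thus $(W',\varepsilon_W)$ is a directed closed walk, hence an Eulerian walk, and since $B$ and $D$ are nonempty it is properly contained in $W$ --- a contradiction.

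The step I expect to be the main obstacle is the bookkeeping in (b): pinning down the second shared vertex $v$ (where it is precisely the topological definition of cut-point that subsumes the loop-at-$u$ degeneracies), and then tracking signs carefully enough to see that $W'=AC^{-1}$ --- rather than, say, the subwalk $AB$, whose closing relation fails by a factor of two --- is the recombination that yields a genuine directed closed walk shorter than $W$.
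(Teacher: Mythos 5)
Your proof is correct and takes essentially the same route as the paper's: part (a) extracts the directed closed subwalk from $u_\alpha$ to $u_\beta$ whose closing relation is exactly the failure of midway-back avoidance, and part (b) sets up the interleaved pattern $\delta<\mu<\eta<\nu$ at a non-cut-point double vertex and recombines into the directed closed walk $AC^{-1}$, which is precisely the paper's walk $W_1$ (the paper also exhibits the complementary walk $B^{-1}D$, but one suffices for the contradiction). Your explicit check that the recombined walk is genuinely an Eulerian walk properly contained in $W$ is, if anything, slightly more careful than the paper's.
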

\begin{proof}
(a) Suppose $(W,\varepsilon_W)$ is not midway-back avoided. Then $W$
can be written as
\[
W=u_0x_1u_1\ldots u_{\alpha-1} x_\alpha u_\alpha\ldots u_{\beta-1}
x_\beta u_\beta\ldots u_{\ell-1}x_\ell u_\ell,
\]
where $0\leq\alpha<\beta<\ell$, $u_\alpha=u_\beta$, and
$\varepsilon_W(u_\beta,x_\beta)=-\varepsilon_W(u_\alpha,x_{\alpha+1})$.
Then $(W',\varepsilon_W)$ is a directed closed walk contained
properly in $(W,\varepsilon_W)$ as multisets, where
\[
W'=u_\alpha x_{\alpha+1} u_{\alpha+1}\ldots u_{\beta-1} x_\beta
u_\beta.
\]
This is a contradiction.

(b) Suppose there is a double vertex in $W$ that is not a cut-point
of $\Sigma(W)$. Then we can write $W$ as
\[
W=u_\delta x_{\delta+1}u_{\delta+1}\dots x_{\mu}u_\mu \ldots
x_{\eta}u_\eta  \ldots x_{\nu}u_{\nu} \ldots
u_{\delta-1}x_{\delta}u_\delta,
\]
where $\delta<\mu<\eta<\nu$, $u_\delta=u_\eta$ and $u_\mu=u_\nu$;
see Figure~\ref{n-cut-point}. Since $(W,\varepsilon_W)$ is
midway-back avoided, we have
\begin{equation}\label{eta-delta-nu-mu}
\varepsilon_W(u_\eta, x_\eta)=\varepsilon_W(u_\delta, x_{\delta+1}),
\sp \varepsilon_W(u_\nu, x_\nu)=\varepsilon_W(u_\mu, x_{\mu+1}).
\end{equation}
Since $(W,\varepsilon_W)$ is directed, we must have
\begin{equation}\label{eta-delta-nu-mu-1}
\varepsilon_W(u_\delta, x_\delta)=\varepsilon_W(u_\eta, x_{\eta+1}),
\sp \varepsilon_W(u_\mu, x_\mu)=\varepsilon_W(u_\nu, x_{\nu+1}).
\end{equation}
Thus (\ref{eta-delta-nu-mu}) and (\ref{eta-delta-nu-mu-1}) imply
that $(W_1,\varepsilon_W)$ and $(W_2,\varepsilon_W)$ are directed
closed walks and are contained properly in $(W,\varepsilon_W)$ as
multisets, where
\[
W_1=u_\delta x_{\delta+1}u_{\delta+1}\ldots u_{\mu-1}x_\mu
u_{\mu}(u_\nu)x_{\nu}u_{\nu-1}\ldots u_{\eta+1}x_{\eta+1}u_\eta,
\]
\[
W_2=u_\eta x_{\eta}u_{\eta-1}\ldots u_{\mu+1}x_{\mu+1}
u_{\mu}(u_\nu)x_{\nu+1}u_{\nu+1}\ldots
u_{\delta-1}x_{\delta}u_\delta.
\]
This is a contradiction.
\end{proof}

\begin{thm}[Characterization of Minimal Eulerian
Walk]\label{Min-Directed-Eulerian-Walk-to-Eulerian-Cycle-Tree} Let
$W$ be a minimal Eulerian walk with a direction $\varepsilon_W$.
Then $\Sigma(W)$ is an Eulerian cycle-tree $T$, $W$ uses each edge
of block cycles once and each edge of block paths twice of $T$, and
$\varepsilon_W$ induces a direction $\varepsilon_T$ on $T$.
Moreover,
\begin{equation}\label{Walk-Flow-via-Eulerian-Cycle-Tree}
f_{(W,\,\varepsilon_W)} = [\varepsilon,\varepsilon_T]\cdot I_T.
\end{equation}
\end{thm}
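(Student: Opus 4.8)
The plan is to reduce the structural assertions to the two results immediately preceding, and then to verify the flow identity \eqref{Walk-Flow-via-Eulerian-Cycle-Tree} by a short edge-by-edge computation.

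First I would note that $W$ is a balanced closed walk, so Lemma~\ref{Directed-Walk-to-Sign} gives $\varepsilon_W(u_n,x_n)=-\sigma(W)\varepsilon_W(u_0,x_1)=-\varepsilon_W(u_0,x_1)$; hence $(W,\varepsilon_W)$ is a directed closed walk in the technical sense required below. By Lemma~\ref{Minimal-Directed-Eulerian-Walk-to-Midway-Cut-Point}, $(W,\varepsilon_W)$ is midway-back avoided and every double vertex of $W$ is a cut-point of $\Sigma(W)$. Theorem~\ref{Midway-Cut-Point-to-Eulerian-Cycle-Tree} now applies and delivers all three structural claims simultaneously: $T:=\Sigma(W)$ is an Eulerian cycle-tree; the restriction $\varepsilon_T:=\varepsilon_W|_T$ is a direction on $T$; and $W$ traverses each block-cycle edge exactly once and each block-path edge exactly twice, crossing between blocks at the cut-points.

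It remains to prove \eqref{Walk-Flow-via-Eulerian-Cycle-Tree}. Because $W$ is an Eulerian walk, its direction $\varepsilon_W$ assigns the \emph{same} orientation to all occurrences of a fixed edge, so the coupling value $[\varepsilon,\varepsilon_W](x)$ depends only on $x$, not on which occurrence $x_i$ in $W$ one looks at; moreover $[\varepsilon,\varepsilon_W](x)=[\varepsilon,\varepsilon_T](x)$ whenever $x$ lies in $T$, since $\varepsilon_W$ and $\varepsilon_T$ agree on $T$. Consequently, by \eqref{FW}, $f_{(W,\,\varepsilon_W)}(x)$ equals the multiplicity of $x$ in $W$ times $[\varepsilon,\varepsilon_T](x)$. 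If $x\notin\supp W$ this multiplicity is $0$ and $I_T(x)=0$; if $x$ is on a block cycle the multiplicity is $1$ and $I_T(x)=1$; if $x$ is on a block path the multiplicity is $2$ and $I_T(x)=2$. In every case $f_{(W,\,\varepsilon_W)}(x)=[\varepsilon,\varepsilon_T](x)\,I_T(x)$, which is \eqref{Walk-Flow-via-Eulerian-Cycle-Tree}.

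The only nontrivial ingredient here is the orientation-consistency of $W$ on repeated edges, which is built into the definition of Eulerian walk; the rest is bookkeeping matching multiplicities in $W$ against the values of $I_T$. I would take a little extra care with degenerate configurations---a block path of zero length contributes no edges and needs no check, and for a repeated edge that is a loop or a negative loop I would confirm that $[\varepsilon,\varepsilon_W]$ remains single-valued once $\varepsilon_W$ is fixed, so that the factor $2$ in $I_T$ is indeed the right multiplicity. I do not anticipate any genuine obstacle beyond these conventions.
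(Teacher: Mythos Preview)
Your proof is correct and follows essentially the same route as the paper's: invoke Lemma~\ref{Minimal-Directed-Eulerian-Walk-to-Midway-Cut-Point} and Theorem~\ref{Midway-Cut-Point-to-Eulerian-Cycle-Tree} for the structural claims, then read off the identity from the definitions \eqref{FW} and \eqref{Indicator-Function} using $\varepsilon_W=\varepsilon_T$ on $T$. Your explicit check via Lemma~\ref{Directed-Walk-to-Sign} that $(W,\varepsilon_W)$ is a directed closed walk, and your edge-by-edge unpacking of the flow identity, are welcome expansions of steps the paper leaves implicit.
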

\begin{proof}
It follows from
Lemma~\ref{Minimal-Directed-Eulerian-Walk-to-Midway-Cut-Point} and
Theorem~\ref{Midway-Cut-Point-to-Eulerian-Cycle-Tree} that
$\Sigma(W)$ is an Eulerian cycle-tree. Since
$\varepsilon_W(u,x)=\varepsilon_T(u,x)$ for vertex-edge pairs
$(u,x)$ on $T$, the identity
\eqref{Walk-Flow-via-Eulerian-Cycle-Tree} follows from definitions
\eqref{FW} and \eqref{Indicator-Function}.
\end{proof}

\begin{thm}[Minimality of Eulerian
Cycle-Tree]\label{Eulerian-Cycle-Tree-to-Minimality} Let $T$ be an
Eulerian cycle-tree with a direction $\varepsilon_T$. Then $T$ is
minimal in the sense that if $T_1$ is an Eulerian cycle-tree
contained in $T$ and block paths of $T_1$ are block paths of $T$
then $T_1=T$.

Moreover, if $W$ is a closed walk on $T$ that uses each edge of
block cycles once and each edge of block paths twice, then $W$ is a
minimal Eulerian walk. In particular, each closed walk found by FRA
is a minimal Eulerian walk.
\end{thm}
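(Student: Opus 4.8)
The plan is to prove the two assertions in turn, deducing the ``moreover'' part from the first assertion (minimality of $T$ among Eulerian cycle-trees) together with Theorem~\ref{Min-Directed-Eulerian-Walk-to-Eulerian-Cycle-Tree}.

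For the first assertion I would induct on the number $k$ of block cycles of $T$. If $k=1$ then $T$ is a single balanced cycle; every Eulerian cycle-tree contains a cycle, and the only cycle sitting inside $T$ is $T$ itself, so an Eulerian cycle-tree $T_1\subseteq T$ must equal $T$. For $k\ge 2$ I would reuse the reduction set up just before Lemma~\ref{Cycle-Tree-to-Minimum-Closed-Walk}: fix an end-block cycle $C_0$ with its unique intersection vertex $u_0$, let $P$ be the block path from $u_0$ to the vertex $w_0$ on the next block cycle $C_1$, and let $T^{\ast}$ be the Eulerian cycle-tree with $k-1$ block cycles obtained by deleting $C_0$ and $P$ and switching the sign of an edge $z_1$ of $C_1$ at $w_0$. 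Given $T_1$ as in the statement, I would analyse whether $C_0$ is a block cycle of $T_1$. In the affirmative case $T_1$ must also contain $P$ and a block cycle attached to $P$ at $w_0$, so applying the same deletion-and-switch to $T_1$ produces an Eulerian cycle-tree $T_1^{\ast}\subseteq T^{\ast}$ whose block paths are block paths of $T^{\ast}$; the induction hypothesis gives $T_1^{\ast}=T^{\ast}$, and undoing the switch of $z_1$ and reinserting $C_0$ and $P$ yields $T_1=T$. The negative case — $C_0$ absent from $T_1$ — must be excluded: since $C_0$ is attached to the rest of $T$ only along $P$ (equivalently only at $u_0$), omitting $C_0$ forces all of $P$ to be omitted too, so $T_1\subseteq T\backslash(C_0\cup P)$, and then comparing the number of intersection vertices of $C_1$ in $T_1$ and in $T$ (which have the same sign) against their respective Parity Conditions yields a contradiction.

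For the ``moreover'' assertion, let $W$ be a closed walk on $T$ using every block-cycle edge once and every block-path edge twice. By the closing step of the ``$\Leftarrow$'' direction of Theorem~\ref{Parity-Consition-via-Direction}, $(W,\varepsilon_T)$ is a directed Eulerian walk. Suppose $W$ were not minimal; shrinking an offending Eulerian walk we may assume we have a minimal Eulerian walk $W'$ properly contained in $W$ as edge multisets. By Theorem~\ref{Min-Directed-Eulerian-Walk-to-Eulerian-Cycle-Tree}, $T':=\Sigma(W')$ is an Eulerian cycle-tree, $W'$ uses each block-cycle edge of $T'$ once and each block-path edge of $T'$ twice, and $T'\subseteq T$. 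A block-path edge of $T'$ has multiplicity $2$ in $W'$, hence multiplicity $2$ in $W$, hence is a block-path edge of $T$; a block-cycle edge of $T'$ lies on a cycle of $T'$, hence on a cycle of $T$, hence is not a bridge of $T$ and so is a block-cycle edge of $T$. Thus the block paths of $T'$ are block paths of $T$, the first assertion forces $T'=T$, and then $W'$ and $W$ have identical edge multiplicities, so $W'=W$, contradicting properness. Finally, by Corollary~\ref{FRA-Walk-to-Eulerian-Cycle-Tree} every closed walk found by FRA uses each block-cycle edge of its Eulerian cycle-tree once and each block-path edge twice, hence is a minimal Eulerian walk by what has just been proved.

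The hard part is the exclusion of the negative case in the inductive step, i.e.\ ruling out that $T_1$ omits the end-block cycle $C_0$; this is exactly where the precise meaning of ``$T_1$ contained in $T$'' and ``block paths of $T_1$ are block paths of $T$'' interacts with the Parity Condition, and it rests on the structural facts that an end-block cycle meets the rest of the cycle-tree only at its unique intersection vertex and that (block paths being vertex-disjoint) the endpoint of a block path cannot lie on a second block cycle or on a second block path. One must also track the sign switch at $z_1$ so that the Parity Conditions of $T$, $T_1$, $T^{\ast}$ and $T_1^{\ast}$ are compared consistently, and handle separately the degenerate case where $P$ has length zero, so that $C_0$ and $C_1$ share a vertex and there is no block path to delete and the parity argument for $C_1$ is run directly.
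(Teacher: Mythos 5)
Your proof takes a genuinely different route from the paper's (the paper argues directly, taking an edge of $T\setminus T_1$ attached at a vertex $v$ of a block cycle of $T_1$ and comparing the orientation patterns at $v$), but the decisive problem is your Case~B, the case in which the end-block cycle $C_0$ is not a block cycle of $T_1$. The Parity Condition controls the number of intersection vertices of a block cycle only modulo $2$, and that number can drop by $2$ in passing from $T$ to $T_1$, so the comparison you propose need not produce any contradiction. Concretely, let $B_2$ be a balanced cycle meeting an unbalanced cycle $B_1$ at a vertex $u$ and a second unbalanced cycle $B_3$ at a different vertex $v$, with no other intersections and no block paths. Then $T=B_1\cup B_2\cup B_3$ satisfies conditions (i)--(v) ($B_1,B_3$ unbalanced with one intersection vertex each, $B_2$ balanced with two) and admits a direction, while $T_1=B_2$ is an Eulerian cycle-tree (balanced, zero intersection vertices) properly contained in $T$ whose empty set of block paths is vacuously a set of block paths of $T$. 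Running your induction with $C_0=B_1$ puts you in Case~B, and the parity count for $B_2$ reads ``two intersection vertices in $T$ versus zero in $T_1$,'' which is no violation. No argument resting solely on the Parity Condition can exclude this configuration; indeed this $T_1$ satisfies every stated hypothesis of the first assertion while violating its conclusion, and the closed walk $W$ on this $T$ properly contains the Eulerian walk $B_2$ as an edge multiset. So the gap is not one you can close by sharpening Case~B: some implicit strengthening of ``contained in'' (for instance, that every block cycle of $T_1$ retains in $T_1$ all the intersection vertices it has in $T$, or that the complement $T\setminus T_1$ must again support a flow) is needed before either your induction or any proof can go through.

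Two further points. In Case~A you need to know that the block cycle of $T_1$ to which $P$ attaches at $w_0$ is the \emph{same} cycle $C_1$ that carries the switched edge $z_1$, since otherwise $T_1^{\ast}$ and $T^{\ast}$ differ by incompatible sign switches and $T_1^{\ast}\subseteq T^{\ast}$ fails as signed subgraphs; when $P$ has positive length this follows from condition (iii) (the path meets exactly two cycles, so $w_0$ lies on a unique block cycle of $T$), but when $P$ has length zero you must argue separately that only two blocks of $T$ meet at $u_0=w_0$, which you flag but do not supply. Your ``moreover'' paragraph is essentially the paper's own argument (pass to a minimal $W'\subsetneq W$, identify $\Sigma(W')$ via Theorem~\ref{Min-Directed-Eulerian-Walk-to-Eulerian-Cycle-Tree}, check that its block paths are block paths of $T$ by comparing edge multiplicities, and invoke the first assertion); it is correct as a reduction but inherits whatever repair the first assertion requires.
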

\begin{proof}
Suppose there is an Eulerian cycle-tree $T_1$ contained properly in
$T$, such that block paths of $T_1$ are block paths of $T$. Then
there exists an edge $e\in T\backslash T_1$, incident with a vertex
$v$ on a block cycle $C$ of $T_1$. The vertex $v$ must be an
intersection vertex in $T$ but not an intersection vertex in $T_1$.
Let $e_1,e_2$ be two edges (maybe be an identical loop) on $C$,
incident with $v$. Then $\varepsilon_T(v,e_1)=-\varepsilon_T(v,e_2)$
in $T_1$ and $\varepsilon_T(v,e_1)=\varepsilon_T(v,e_2)$ in $T$.
This is a contradiction.

Let $W$ be a required closed walk on $T$. It is clear that
$(W,\varepsilon_T)$ is a directed Eulerian walk by
Lemma~\ref{Cycle-Tree-to-Minimum-Closed-Walk} and by definition of
$\varepsilon_T$. Let $W_1$ be a minimal Eulerian walk on $T$,
contained properly in $W$ as multisets. Then $T_1=\Sigma(W_1)$ is
contained in $T$ and is an Eulerian cycle-tree by
Theorem~\ref{Min-Directed-Eulerian-Walk-to-Eulerian-Cycle-Tree}. It
is clear that block cycles of $T_1$ are block cycles of $T$. Note
that edges of block paths of $T$ are double edges in $W$, edges of
block paths of $T_1$ are double edges in $W_1$, and double edges in
$W_1$ must be double edges in $W$. It follows that block paths of
$T_1$ are block paths of $T$. Thus $T_1=T$ by the first part of the
theorem. Therefore $M(W_1)=M(W)$.
\end{proof}

\begin{cor}[Characterization and Classification of
Circuits]\label{Equiv-of-Elementary} Let $(W,\varepsilon_W)$ be a
minimal directed Eulerian walk. Then the following statements are
equivalent.
\begin{enumerate}
\item[(a)] $(W,\varepsilon_W)$ is
elementary.

\item[(b)] $f_{(W,\;\varepsilon_W)}$ is elementary.

\item[(c)] $\Sigma(W)$ is a circuit.
\end{enumerate}
Moreover, circuits are classified into Types I, II, III.
\end{cor}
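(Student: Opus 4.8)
The plan is to prove the three statements equivalent by establishing (a) $\Leftrightarrow$ (c) and (a) $\Leftrightarrow$ (b), using the dictionary between minimal directed Eulerian walks and Eulerian cycle-trees already built up, and then to read the classification off the cycle-tree structure. For (a) $\Leftrightarrow$ (c): by Theorem~\ref{Min-Directed-Eulerian-Walk-to-Eulerian-Cycle-Tree}, $T:=\Sigma(W)$ is already an Eulerian cycle-tree with $\supp W=E(T)$. If $W$ is elementary and $T'$ were an Eulerian cycle-tree properly contained in $T$, then Lemma~\ref{Cycle-Tree-to-Minimum-Closed-Walk} produces a closed walk $W'$ on $T'$ using each block cycle once and each block path twice, which by Theorem~\ref{Eulerian-Cycle-Tree-to-Minimality} is a minimal Eulerian walk with $\supp W'=E(T')\subsetneq E(T)=\supp W$, contradicting the elementarity of $W$; hence $T$ is a circuit. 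Conversely, if $T$ is a circuit and $W'$ is a minimal Eulerian walk with $\supp W'\subsetneq\supp W$, then $\Sigma(W')$ is (again by Theorem~\ref{Min-Directed-Eulerian-Walk-to-Eulerian-Cycle-Tree}) an Eulerian cycle-tree whose edge set lies properly inside $E(T)$, contradicting that $T$ is a circuit; hence $W$ is elementary.

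For (a) $\Leftrightarrow$ (b), write $f:=f_{(W,\,\varepsilon_W)}=[\varepsilon,\varepsilon_T]\cdot I_T$, which is nontrivial with $\supp f=E(T)=\supp W$. If $W$ is elementary, then any nontrivial integral flow $g$ with $\supp g\subsetneq\supp f$ would, by the Flow Reduction Theorem~\ref{Flow-Reduction-Theorem}, split into characteristic vectors of minimal directed Eulerian walks $W_i$ with $\supp W_i\subseteq\supp g\subsetneq\supp W$, which is impossible; and if $f=f_1+f_2$ with $f_1(e)f_2(e)\ge 0$ everywhere and $f_1,f_2$ nontrivial, then $\supp f_1=\supp f_2=\supp f$ (otherwise the previous sentence applies), so for an edge $e$ on a block cycle of $T$ one has $|f(e)|=I_T(e)=1$ while $f_1(e),f_2(e)$ are nonzero integers of the same sign — impossible since $T$ has a block cycle. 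Thus $f$ is elementary. Conversely, if $f$ is elementary but $W$ is not, choose a minimal Eulerian walk $W'$ with $\supp W'\subsetneq\supp W$ and any direction $\varepsilon_{W'}$; its characteristic vector $g=f_{(W',\,\varepsilon_{W'})}$ is a nontrivial integral flow (Lemma~\ref{Walk-to-Flow}) with $\supp g=\supp W'\subsetneq\supp f$, contradicting the elementarity of $f$. Together with the previous paragraph this gives the full equivalence.

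For the classification, let $T$ be an Eulerian cycle-tree with $k$ block cycles. If $k=1$, $T$ is a single cycle with no intersection vertices, so the Parity Condition forces it balanced — Type I. Assume $k\ge 2$ and $T$ is a circuit. No block cycle of $T$ is balanced, for a balanced block cycle is by itself a Type~I Eulerian cycle-tree properly contained in $T$; hence every block cycle is unbalanced and, by the Parity Condition, has an odd (in particular at least one) number of intersection vertices. If $k\ge 3$, take an end-block cycle $C_a$ and its unique neighbouring block cycle $C$ in the tree associated with $T$, joined by a block path $P'$ of possibly zero length; then $C_a\cup P'\cup C$ is an Eulerian cycle-tree — its only cycles are $C_a$ and $C$, each having exactly one (cut-point) intersection vertex in it and each unbalanced, so the Parity Condition holds — of Type~II when $P'$ is trivial and of Type~III when $P'$ has positive length, and it is properly contained in $T$, a contradiction. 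Hence $k=2$: two unbalanced block cycles with one intersection vertex each, joined by a block path of zero length (Type~II) or positive length (Type~III). Conversely, each of Types I, II, III is an Eulerian cycle-tree containing no proper Eulerian cycle-tree, since such a subgraph would need a block cycle, the only cycles present are the listed ones, a lone unbalanced cycle violates the Parity Condition, and retaining both unbalanced cycles forces retaining the entire joining path.

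I expect the main obstacle to lie in the last paragraph: one must check carefully that the extracted subgraph $C_a\cup P'\cup C$ genuinely satisfies all four defining conditions of a cycle-tree — in particular that $C_a$ and $C$ are its only cycles and that each of its intersection vertices is a cut-point of this subgraph — that the Parity Condition is inherited, and that $k\ge 3$ indeed makes the containment in $T$ proper. The remaining points to watch are minor: the harmless identification of $[\varepsilon,\varepsilon_W]$ with $[\varepsilon,\varepsilon_T]$ on $E(T)$, and the routine passage between $g$ and $|g|$ when invoking the Flow Reduction Theorem.
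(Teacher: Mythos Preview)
Your proof is correct and follows the paper's overall architecture: (a)$\Leftrightarrow$(c) via the dictionary between minimal Eulerian walks and Eulerian cycle-trees, (a)$\Leftrightarrow$(b) by passing between walks and flows, and the classification by analysing the block structure of a minimal Eulerian cycle-tree.

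The one substantive difference is in (a)$\Rightarrow$(b). The paper argues only the support condition: if $f_{(W,\varepsilon_W)}$ is not elementary it produces a flow $g$ with strictly smaller support and extracts from it a walk witnessing that $W$ is not elementary. It never separately checks that $f_{(W,\varepsilon_W)}$ is \emph{indecomposable}, tacitly treating ``not elementary'' as ``admits a smaller-support flow.'' You close this gap explicitly with the observation that on any block-cycle edge $|f|=I_T=1$, so a same-sign decomposition $f=f_1+f_2$ with $\supp f_1=\supp f_2=\supp f$ is impossible; this is a genuine improvement in rigour. You also invoke the Flow Reduction Theorem to obtain minimal Eulerian walks inside $\supp g$, whereas the paper appeals (somewhat opaquely) to Lemma~\ref{Walk-to-Flow} for the same purpose; your route is cleaner and avoids the need to argue separately that the resulting walk can be taken minimal. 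In the classification you count block cycles where the paper counts block paths, and you additionally verify the easy converse that Types~I--III really are circuits; these are cosmetic differences. Your self-identified ``obstacle'' about checking that $C_a\cup P'\cup C$ is an Eulerian cycle-tree is real but routine, exactly as you say.
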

\begin{proof}
(a) $\Leftrightarrow$ (b): Assume $(W,\varepsilon_W)$ is not
elementary, that is, there exists a minimal directed Eulerian walk
$(W_1,\varepsilon_{W_1})$ such that $\supp W_1\subsetneq \supp W$.
Since $\supp W_1=f_{(W_1,\,\varepsilon_{W_1})}$ and $\supp W=\supp
f_{(W,\,\varepsilon_{W})}$, then $\supp
f_{(W_1,\varepsilon_{W_1})}\subsetneq \supp
f_{(W,\varepsilon_{W})}$. This means that
$f_{(W,\,\varepsilon_{W})}$ is not elementary.

Conversely, assume $f_{(W,\;\varepsilon_W)}$ is not elementary, that
is, there is a flow $g$ such that $\supp g\subsetneq\supp
f_{(W,\,\varepsilon_W)}$. We may require $\Sigma(\supp g)$ to be
connected. By Lemma~\ref{Walk-to-Flow} there exists a directed
closed walk $(W_1,\varepsilon_g)$ on $\Sigma(\supp g)$ such that
$g=f_{(W_1,\,\varepsilon_g)}$. Since $\supp W_1=\supp g$ and $\supp
f_{(W,\,\varepsilon_W)}=\supp W$, then $\supp W_1\subsetneq\supp W$.
This means that $(W,\varepsilon_W)$ is not elementary.

(a) $\Leftrightarrow$ (c): If $(W,\varepsilon_W)$ is not elementary,
that is, there exists a minimal directed Eulerian walk
$(W_1,\varepsilon_{W_1})$ such that $\supp W_1\subsetneq\supp W$.
Then $\Sigma(W_1)$ is an Eulerian cycle-tree by
Theorem~\ref{Min-Directed-Eulerian-Walk-to-Eulerian-Cycle-Tree} and
is properly contained in $\Sigma(W)$. This means that $\Sigma(W)$ is
not a circuit.

Conversely, if $\Sigma(W)$ is not a circuit, that is, there exists
an Eulerian cycle-tree $T_1$ contained properly in $\Sigma(W)$. Let
$\varepsilon_{T_1}$ be a direction on $T_1$, and $W_1$ be a closed
walk that uses each edge of block cycles once and each edge of block
paths twice of $T_1$. Then $(W_1,\varepsilon_{T_1})$ is a minimal
directed Eulerian walk and $\supp W_1\subsetneq \supp W$. This means
that $(W,\varepsilon_W)$ is not elementary.

Now let $T$ be an Eulerian cycle-tree and be further a circuit. Then
$T$ contains at most one block path (of possible zero length).
Otherwise, suppose there are two or more block paths in $T$, then
one block path together with its two block cycles form an Eulerian
cycle-tree, which is properly contain in $T$; this means that $T$ is
not a circuit, a contradiction. If there is no block path in $T$,
then $T$ must be a single balanced cycle, which is a circuit of Type
I. If $T$ contains exactly one block path, the length of the block
path is either zero or positive.

In the case of zero length for the block path, $T$ consists of two
block cycles having a common vertex, which is a circuit of Type II.
In the case of positive length for the block path, $T$ consists of
two block cycles and the block path connecting them, which is a
circuit of Type III.
\end{proof}

\begin{thm}[Half-integer
Scale Decomposition]\label{Eulerin-Cycle-Tree-to-Half-Scale-Decom}
Let $T$ be an Eulerian cycle-tree with a direction $\varepsilon_T$.
Let $W$ be a closed walk on $T$ that uses each edge of block cycles
once and each edge of block paths twice. If $T$ is not a circuit,
then $W$ can be divided into the form
\begin{equation}
W=C_0P_1C_1P_2\cdots P_kC_{k}P_{k+1}, \sp k\geq 1,
\end{equation}
satisfying the following four conditions:
\begin{enumerate}
\item[(i)] $\{C_i\}$ is the collection of all end-block cycles of $T$ and $P_i$ are simple open paths of
positive lengths.

\item[(ii)] Each edge of non-end-block cycles appears in exactly one of the paths
$P_i$, and each edge of block paths appears in exactly two of the
paths $P_i$.

\item[(iii)] Each $(C_{i}P_{i+1}C_{i+1},\varepsilon_T)$ $(0\leq
i\leq k)$ is a directed circuit of Type III with $C_{k+1}=C_0$.

\item[(iv)] Half-integer scale decomposition
\begin{equation}
 I_T = \frac{1}{2} \sum_{i=0}^{k}
 I_{\Sigma(C_{i}P_{i+1}C_{i+1})}.
\end{equation}
\end{enumerate}
\end{thm}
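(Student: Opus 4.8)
The plan is to argue by induction on the number $n$ of block cycles of $T$, combining the peeling construction of \eqref{T1} with a local analysis of how $W$ sits at the intersection vertices. Since $T$ is not a circuit it is neither a single balanced cycle (Type I) nor two cycles sharing a vertex or joined by a block path (Type II or III), so $n\geq 3$ and the associated tree $\mathcal{T}$ of $T$ has at least two leaves; hence there are at least two end-block cycles, which will give $k\geq 1$. Two preliminary facts are used throughout. First, every end-block cycle has exactly one intersection vertex, so an odd number of them, so is unbalanced by the Parity Condition --- this is exactly what forces the pieces $C_iP_{i+1}C_{i+1}$ below to be of Type III. Second, by Theorem~\ref{Eulerian-Cycle-Tree-to-Minimality} and Lemma~\ref{Minimal-Directed-Eulerian-Walk-to-Midway-Cut-Point}, $(W,\varepsilon_T)$ is a directed minimal Eulerian walk which is midway-back avoided, whose double edges are exactly the edges of block paths and whose double vertices are exactly the cut-points of $T$; in particular $W$ crosses from one block to another at each cut-point.

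Base case $n=3$: here $\mathcal{T}$ is a path $C_0-B-C_1$ with $C_0,C_1$ end-block cycles and $B$ a cycle with exactly two intersection vertices, which splits $B$ into two arcs. Tracing $W$ off $\mathcal{T}$, it runs once around $C_0$, crosses to $B$, runs along one arc of $B$, crosses to $C_1$, runs once around $C_1$, returns to $B$, runs along the second arc, and returns to $C_0$; this is precisely $W=C_0P_1C_1P_2$ with $k=1$, where each $P_i$ is the concatenation of two (possibly zero-length) block paths and one arc of $B$, hence a simple open path of positive length, and $C_0P_1C_1,\ C_1P_2C_0$ are Type III circuits on which $\varepsilon_T$ restricts to a direction. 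For the inductive step $n\geq 4$, fix an end-block cycle $C_0$ with unique intersection vertex $v_0$ and let $P$ be the block path (of possible zero length) joining $C_0$ to a block cycle $C_1$ at a vertex $w_0$. The key claim is that $W=C_0\,P\,W_1\,P^{-1}$: the vertex $v_0$ occurs exactly twice in $W$, and since the two edges of $C_0$ at $v_0$ carry the same $\varepsilon_T$-orientation (a sink or source of $C_0$), neither occurrence can pair those two edges with each other in the directed walk, so each occurrence pairs a $C_0$-edge with the $P$-edge; tracing this out forces $W$ to run once around $C_0$, out along $P$, around a closed walk $W_1$ based at $w_0$ on $T_1=T\backslash(C_0\cup P)$, and back along $P^{-1}$. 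Switching the sign of an edge $z_1$ of $C_1$ at $w_0$ (with the adjustment of $\varepsilon_T$ at $(w_0,z_1)$ as in Theorem~\ref{Parity-Consition-via-Direction}) turns $T_1$ into an Eulerian cycle-tree $T_1'$ with $n-1\geq 3$ block cycles --- hence not a circuit --- on which $(W_1,\varepsilon_{T_1'})$ is again a closed walk using block-cycle edges once and block-path edges twice, so the induction hypothesis gives a division $W_1=D_0R_1D_1\cdots R_{k'}D_{k'}R_{k'+1}$ into the end-block cycles $D_i$ of $T_1'$ and Type III circuits $D_iR_{i+1}D_{i+1}$ with $I_{T_1'}=\tfrac12\sum_{i=0}^{k'}I_{\Sigma(D_iR_{i+1}D_{i+1})}$.

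To lift this to $T$, locate $w_0$ in the division of $W_1$: it lies in the interior of exactly one $R_j$ when $C_1$ is not an end-block cycle of $T_1'$, and in the interior of the unique $D_j$ equal to $C_1$ when $C_1$ has become an end-block cycle of $T_1'$ (which happens precisely when $C_1$ had two intersection vertices in $T$). In either case rotate $W_1$ to begin at $w_0$, splice in $C_0$ and $P$ forwards before it and $P^{-1}$ after it, and regroup. Then the end-block cycles appearing are exactly $C_0$ together with the end-block cycles of $T$ other than $C_0$, since the possibly new end-block cycle $C_1$ of $T_1'$, not being an end-block cycle of $T$, is split by $w_0$ and absorbed into the two neighbouring pieces; by the induction hypothesis and the structure of the splice, each resulting $P_i$ is a concatenation of pairwise internally disjoint pieces --- block paths and arcs of non-end-block cycles --- hence a simple open path of positive length; each resulting $C_iP_{i+1}C_{i+1}$ is a Type III circuit on which $\varepsilon_T$ restricts to a direction, because $(W,\varepsilon_T)$ is a directed walk and the cut-points of the circuit inherit the sink/source behaviour of $C_i,C_{i+1}$ in $T$; and (i), (ii) hold by construction. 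Finally (iv) follows from (i)--(iii) by an edge-by-edge comparison of $I_T$ with $\sum_{i=0}^{k}I_{\Sigma(C_iP_{i+1}C_{i+1})}$: an edge of an end-block cycle lies on the block cycles of the two consecutive circuits $C_{i-1}P_iC_i$ and $C_iP_{i+1}C_{i+1}$, contributing $1+1$; an edge of a non-end-block cycle lies on the block path of exactly one circuit, contributing $2$; an edge of a block path lies on the block paths of exactly two circuits, contributing $2+2$; halving reproduces $I_T$ in each case.

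The one genuinely non-routine step is the reduction $W=C_0\,P\,W_1\,P^{-1}$ --- equivalently, the statement that the closed walk induced by $W$ on the tree $\mathcal{T}$ traverses every edge exactly twice, so that $W$ realizes a depth-first traversal of $\mathcal{T}$ visiting each leaf once with simple paths between consecutive leaves. It rests entirely on the local combinatorics at intersection vertices, where the defining properties of a direction ($\varepsilon_T$ has a sink or a source of each block cycle at each of its cut-points, but no sink or source anywhere in $T$) together with connectedness of $W$ pin down how $W$ pairs the edge-ends at each vertex. The remaining work --- tracking the sign switch at $z_1$ and correctly re-inserting $C_0$ into the cyclic order of end-block cycles --- is routine bookkeeping that I would keep brief.
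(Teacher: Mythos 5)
Your proposal is correct and follows essentially the same route as the paper's proof: an induction that peels off an end-block cycle $C_0$ together with its block path $P$ (writing $W=C_0PW_1P^{-1}$), switches the sign of an edge of the adjacent cycle at the attachment vertex to keep the reduced graph an Eulerian cycle-tree, and then splices the inductive decomposition back according to whether that adjacent cycle has become an end-block cycle of the reduced tree — the paper merely phrases the induction on the number of block paths rather than block cycles, which is equivalent. Your added justifications of the reduction $W=C_0PW_1P^{-1}$ and of (iv) by edge-counting are elaborations of steps the paper leaves implicit, not a different argument.
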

\begin{proof}
We proceed by induction on the number of block paths in $T$,
including those of zero length. Note that $T$ is a circuit if there
is none or exactly one block path.
\begin{figure}[h]
\includegraphics[width=55mm]{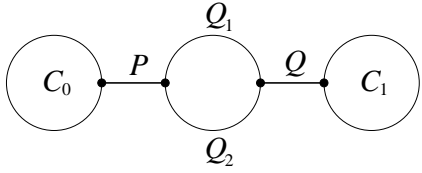}
\caption{An Eulerian cycle-tree with two block
paths.}\label{Two-Block-Paths}
\end{figure}
When $T$ has exactly two block paths, then $T$ has the form in
Figure~\ref{Two-Block-Paths}. Since $W$ crosses each cut-point from
one block to the other block, then $W$ can be written as
$W=C_0P_1C_1P_2$, where $P_1=PQ_1Q$, $P_2=Q^{-1}Q_2^{-1}P^{-1}$.
Then $C_0P_1C_1, C_1P_2C_0$ are circuits of Type III. We thus have
the decomposition
\[
I_T =\frac{1}{2}I_{\Sigma(C_0P_1C_1)} +\frac{1}{2}
I_{\Sigma(C_1P_2C_0)}.
\]

When $T$ has three or more block paths (of possible zero length),
choose an end-block cycle $C$ and a block path $P$ (of possible zero
length) having its initial vertex $u$ on $C$ and its terminal vertex
$v$ on another block cycle $C'$. Since $T$ has at least three block
paths, the cycle $C'$ cannot be a loop; so all edges of $C'$ are not
loops. Choose an edge $x$ on $C'$ at $v$, change the sign of $x$,
and remove the cycle $C$ and the internal part of $P$ from $T$. We
obtain an Eulerian cycle-tree $T'$; see Figures~\ref{Case-1} and
\ref{Case-2}. Then $W$ can be written as $W=CPW'P^{-1}$, where $W'$
is a closed walk on $T'$ that uses each edge of block cycles once
and each edge of block paths twice. Thus $(W',\varepsilon_{T'})$ is
a minimal Eulerian walk, where $\varepsilon_{T'}$ is a direction of
$T'$ and $\varepsilon_{T'}=\varepsilon_T$ except
$\varepsilon_{T'}(v,x) =-\varepsilon_{T}(v,x)$. By induction $W'$
can be written as
\[
W'=C'_0P'_1C'_1P'_2\cdots P'_kC'_{k}P'_{k+1},
\]
satisfying the conditions (i)--(iv). There are two cases: $C'$ is
either an end-block cycle of $T'$, or $C'$ is not an end-block cycle
of $T'$.

In the case that $C'$ is an end-block cycle of $T'$, we may assume
$C'_k=C'$, having its unique intersection vertex at $w$ in $T'$. Let
us write $C'$ as a closed path $C'_k=P'Q'$, where $P'$ is a path
from $v$ to $w$ on $C'$ and $Q'$ is the other path from $w$ to $v$
on $C'$. Note that $P'_k$ is a path whose terminal vertex is $w$,
and $P'_{k+1}$ is a path whose initial vertex is $w$; see
Figure~\ref{Case-1}.
\begin{figure}[h]
\includegraphics[width=55mm]{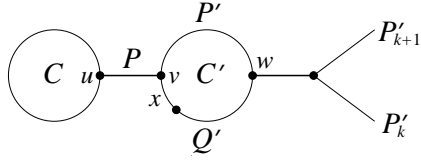}
\caption{$C'$ has two intersection vertices.}\label{Case-1}
\end{figure}
Set $C_{i}=C'_i$ $(0\leq i\leq k-1)$, $P_i=P'_i$ $(1\leq i\leq
k-1)$, and
\[
P_k=P'_kQ'P^{-1}, \hspace{2ex} C_k=C, \hspace{2ex}
P_{k+1}=PP'P'_{k+1}.
\]
Then $W=C_0P_1C_1P_2\cdots P_kC_{k}P_{k+1}$ is a closed walk on $T$,
satisfying the conditions (i)--(iv); see Figure~\ref{Case-1}.

In the case that $C'$ is not an end-block cycle of $T'$, we may
assume that $P'_{k+1}$ contains the vertex $v$ and the edge $x$. Let
us write $P'_{k+1}=P'Q'$, where $P'$ is a path whose terminal vertex
is $v$ and $Q'$ is a path whose initial vertex is $v$; see
Figure~\ref{Case-2}.
\begin{figure}[h]
\includegraphics[width=45mm]{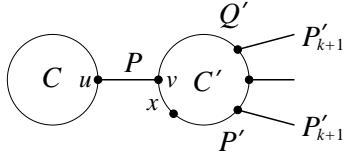}
\caption{$C'$ has more than two intersection
vertices.}\label{Case-2}
\end{figure}
Set $C_{i}=C'_i$ $(0\leq i\leq k)$, $P_i=P'_i$ $(1\leq i\leq k)$,
and
\[
P_{k+1}=P'P^{-1}, \hspace{2ex} C_{k+1}=C, \hspace{2ex} P_{k+2}=PQ'.
\]
Then $W=C_0P_1C_1P_2\cdots P_{k+1}C_{k+1}P_{k+2}$ is a closed walk
on $T$, satisfying the conditions (i)--(iv) with the direction
$\varepsilon_T$; see Figure~\ref{Case-2}.
\end{proof}

\noindent {\bf Problem.} An Eulerian cycle-tree is said to be {\em
bridgeless} if it does not contain block paths of positive length.
The indicator function of a bridgeless Eulerian cycle-tree has
constant value $1$ on its support. It should be interesting to
consider integral flows $f$ such that $\Sigma(f)$ is connected and
has no bridges; we may call such integral flows as {\em bridgeless
flows}. A bridgeless flow $f$ is said to be {\em bridgeless
decomposable} if there exist nontrivial bridgeless flows $f_1,f_2$
such that $f=f_1+f_2$, where $f_i$ have the same sign, that is,
$f_1\cdot f_2\geq 0$. It is particularly wanted to classify {\em
bridgeless indecomposable flows}, that is, the integral flows that
are not bridgeless decomposable.

\end{document}